\newcommand{\Rni}{\mathbb{R}^{n_{i}}}
\newcommand{\nfR}{\mathbb{R}}
\newcommand{\xpi}{x_i}
\newcommand{\xmi}{x_{-i}}
\newcommand{\umi}{u_{-i}}
\newcommand{\mbF}{\mbox{F}}
\newcommand{\fpi}{f_{i}}
\newcommand{\lip}{\left<}
\newcommand{\rip}{\right>}
\newcommand{\lvt}{\left[}
\newcommand{\rvt}{\right]}
\newcommand{\qmod}{\mbox{Qmod}}
\newcommand{\idl}{\mbox{Ideal}}
\newcommand{\ddd}{,\dots,}
\DeclareMathOperator{\Rank}{rank}
\newcommand{\re}{\mathbb{R}}
\newcommand{\cpx}{\mathbb{C}}
\newcommand{\N}{\mathbb{N}}
\def\af{\alpha}
\newcommand{\st}{\mathit{s.t.}}
\newcommand{\reff}[1]{(\ref{#1})}
\newcommand{\lmd}{\lambda}
\newcommand{\dt}{\delta}
\newcommand{\mc}[1]{\mathcal{#1}}
\def\rank{\mbox{rank}}
\def\cod{\mbox{codim}}
\newcommand{\bdes}{\begin{description}}
	\newcommand{\edes}{\end{description}}
\newcommand{\bal}{\begin{align}}
	\newcommand{\eal}{\end{align}}
\newcommand{\bnum}{\begin{enumerate}}
	\newcommand{\enum}{\end{enumerate}}
\newcommand{\bit}{\begin{itemize}}
	\newcommand{\eit}{\end{itemize}}
\newcommand{\bea}{\begin{eqnarray}}
	\newcommand{\eea}{\end{eqnarray}}
\newcommand{\be}{\begin{equation}}
	\newcommand{\ee}{\end{equation}}
\newcommand{\baray}{\begin{array}}
	\newcommand{\earay}{\end{array}}
\newcommand{\bsry}{\begin{subarray}}
	\newcommand{\esry}{\end{subarray}}
\newcommand{\bca}{\begin{cases}}
	\newcommand{\eca}{\end{cases}}
\newcommand{\bcen}{\begin{center}}
	\newcommand{\ecen}{\end{center}}
\newcommand{\bbm}{\begin{bmatrix}}
	\newcommand{\ebm}{\end{bmatrix}}
\newcommand{\btab}{\begin{tabular}}
	\newcommand{\etab}{\end{tabular}}
\theoremstyle{definition}
\newtheorem{definition}{Definition}[section]
\newtheorem{lemma}[definition]{Lemma}
\newtheorem{theorem}[definition]{Theorem}
\newtheorem{example}[definition]{Example}
\newtheorem{alg}[definition]{Algorithm}
\theoremstyle{remark}
\newtheorem{remark}[definition]{Remark}
\numberwithin{equation}{section}
\author{Kisun Lee}
\address{Department of Mathematics, University of California San Diego, 9500 Gilman Drive, La Jolla, CA 92093}
\email {kil004@ucsd.edu}\urladdr{https://klee669.github.io}
\author{Xindong Tang}
\address{Department of Applied Mathematics, The Hong Kong Polytechnic University, Hung Hom, Kowloon, Hong Kong}
\email {xindong.tang@polyu.edu.hk}\urladdr{https://www.polyu.edu.hk/ama/profile/txd/}
\keywords{generalized Nash equilibrium problem, polyhedral homotopy, polynomial optimization, numerical algebraic geometry}
\subjclass{91A80, 91A06, 14Q05, 90C23}
\date{\today}
\begin{document}

	\title[Polyhedral homotopy for GNEPs of polynomials]{On the polyhedral homotopy method for solving generalized Nash equilibrium problems of polynomials}
	
	\date{}
	
	\begin{abstract}
		The \emph{generalized Nash equilibrium problem} (GNEP) is a kind of game to find strategies
		for a group of players such that each player's objective function is optimized.
		Solutions for GNEPs are called \emph{generalized Nash equilibria} (GNEs).
		In this paper, we propose a numerical method for finding GNEs of GNEPs of polynomials based on the polyhedral homotopy continuation and the Moment-SOS hierarchy of semidefinite relaxations.
		We show that our method can find all GNEs if they exist,
		or detect the nonexistence of GNEs,
		under some genericity assumptions.
		Some numerical experiments are made to demonstrate the efficiency of our method.
	\end{abstract}
	\maketitle
	\makeatletter
	\newcommand{\subjclass}[2][1991]{%
		\let\@oldtitle\@title%
		\gdef\@title{\@oldtitle\footnotetext{#1 \emph{Mathematics subject classification.} #2}}%
	}
	\newcommand{\keywords}[1]{%
		\let\@@oldtitle\@title%
		\gdef\@title{\@@oldtitle\footnotetext{\emph{Key words and phrases.} #1.}}%
	}
	\makeatother
	\section{Introduction}
	
	Suppose there are  $N$  players and the $i$th player's strategy is a vector
	$x_{i}\in\mathbb{R}^{n_{i}}$ (the $n_i$-dimensional real Euclidean space).
	We write that
	\[x_i:=(x_{i,1},\ldots,x_{i,n_i}),\quad x:=(x_1,\ldots,x_N).\]
	The total dimension of all strategies is $n := n_1+ \cdots + n_N.$
	When the $i$th player's strategy is considered,
	we use $x_{-i}$ to denote the subvector of all players' strategies
	except the $i$th one, i.e., 
	\[
	x_{-i} \, := \, (x_1, \ldots, x_{i-1}, x_{i+1}, \ldots, x_N),
	\]
	and write $x=(x_{i},x_{-i})$ accordingly.
	The \emph{generalized Nash equilibrium problem} (GNEP) is to find a tuple of strategies
	$u = (u_1, \ldots, u_N)$
	such that each $u_i$ is a minimizer of the $i$th player's optimization
	\be
	\label{eq:GNEP}
	\mbox{F}_i(u_{-i}):
	\left\{ \begin{array}{cl}
		\min\limits_{\xpi\in \Rni}  &  \fpi(x_i,u_{-i}) \\
		\st & g_{i,j}(x_i,u_{-i})  = 0, (j\in\mc{E}_i), \\
		& g_{i,j}(x_i,u_{-i})  \ge 0, (j\in\mc{I}_i).
	\end{array} \right.
	\ee
	In the above, the $\mc{E}_i$ and $\mc{I}_i$ are disjoint labeling sets (possibly empty),
	the $f_i$ and $g_{i,j}$ are continuous functions in $x$
	and we suppose $\mc{E}_i\cup \mc{I}_i=\{1,\dots,m_i\}$ for each $i\in\{1,\dots,N\}$.
	A solution to the GNEP is called a \emph{generalized Nash equilibrium} (GNE).
	If defining functions $f_i$ and $g_{i,j}$ are polynomials in $x$ for all $i\in\{1,\dots,N\}$ and $j\in\{1,\dots,m_i\}$,
	then we call the GNEP a \emph{generalized Nash equilibrium problem of polynomials}.
	Besides that, we let $X_i$ be the point-to-set map such that
	\begin{equation*}
		X_i(\xmi) \, := \,
		\left\{x_i \in \Rni \left|\begin{array}{l}
			g_{i,j}(\xpi,\xmi) = 0 ,\, (j\in \mathcal{E}_i),\\
			g_{i,j}(\xpi,\xmi) \geq 0 ,\, (j\in \mathcal{I}_i)
		\end{array}\right.\right\}.
	\end{equation*} 
	Then, $X_i(\xmi)$ is the $i$th player's feasible set for the given other players' strategies $\xmi$.
	Let \[X:=\{x\in\re^n\mid x_i\in X_i(x_{-i}) \mbox{ for all } i=1\ddd N\}.\]
	We say $x\in\re^n$ is \emph{feasible} for this GNEP if $x\in X$.
	Moreover, if for every $i\in\{1,\dots,N\}$ and $j\in\{1,\dots,m_i\}$, the constraining function $g_{i,j}$ only has the variable $x_i$,
	i.e.,  the $i$th player's feasible strategy set is independent of other players' strategies,
	then the GNEP is called a \emph{(standard) Nash equilibrium problem} (NEP).
	The GNEP is said to be {\it convex} if for each $i$ and all $\xmi$ such that $X_i(\xmi)\ne\emptyset$, the $\mbF_i(\xmi)$ is a convex optimization,
	i.e.,  $f_i(\xpi,\xmi)$ is convex in $\xpi$,
	$g_{i,j}\,(j\in\mc{E}_i)$ is linear in $\xpi$,
	and $g_{i,j}\,(j\in\mc{I}_i)$ is convex in $\xpi$.

	GNEPs originated from economics in \cite{debreu1952social,arrow1954existence},
	and have been widely used in many other areas, such as telecommunications \cite{
		ardagna2015generalized}, supply chain \cite{pang2008distributed} and machine learning \cite{liu2016gan}.
	There are plenty of interesting models formulated as GNEP of polynomials,
	and we refer to \cite{pang2008distributed,Nie2021convex,Nie2020nash,facchinei2010penalty,facchinei2010generalized} for them.
	
	For recent studies on GNEPs, one primary task is to develop efficient methods for finding GNEs.
	Indeed, solving GNEPs may easily be out of reach, especially when convexity assumptions are not given.
	For NEPs, some methods are studied in \cite{gurkan2009approximations,ratliff2013characterization}.
	When the NEP is defined by polynomials,
	a method using the Moment-SOS semidefinite relaxation on the KKT system is introduced in \cite{Nie2020nash}.
	For GNEPs, people mainly consider solution methods under convexity assumptions,
	such as the penalty method \cite{facchinei2010penalty,ba2020exact},
	the augmented Lagrangian method \cite{kanzow2016augmented}, 
	the variational and quasi-variational inequality approach \cite{Facchinei2007generalized,facchinei201012,harker1991generalized},
	the Nikaido-Isoda function approach \cite{von2009optimization,von2009relaxation},
	and the interior point method on solving the KKT system \cite{dreves2011solution}.
	Moreover, for convex GNEP of polynomials,
	a semidefinite relaxation method is introduced in \cite{Nie2021convex},
	and it is extended to nonconvex rational GNEPs in \cite{nie2021rational}.
	The Gauss-Seidel method is studied in \cite{nie2021gauss} for nonconvex GNEPs of polynomials.
	We refer to \cite{facchinei2010generalized,facchinei201012} for surveys on GNEPs.
	
	In this paper, we study GNEPs of polynomials.
	The problems without convexity assumptions are mainly considered.
	We propose a method for finding GNEs based on the polyhedral homotopy continuation and the Moment-SOS semidefinite relaxations, and investigate its properties.
	Our main contributions are:
	\begin{itemize}
		\item We propose a numerical algorithm for solving GNEPs of polynomials.
		The polyhedral homotopy continuation is exploited for solving the complex KKT system of GNEPs,
		and we select GNEs from the set of complex KKT points with the help of Moment-SOS semidefinite relaxations.
		
		\item We show that when the GNEP is given by dense polynomials whose coefficients are generic,
		the mixed volume for the complex KKT system is identical to its algebraic degree.
		In this case, the polyhedral homotopy continuation can obtain all complex KKT points, and our algorithm finds all GNEs if they exist, or detect their nonexistence of them.
		
		\item Even when the number of complex KKT points obtained by the polyhedral homotopy is less than the mixed volume, or there exist infinitely many complex KKT points, our algorithm may still find one or more GNEs.
		
		\item Numerical experiments are presented to show the effectiveness of our algorithm.
	\end{itemize}
	This paper is organized as follows.
	In Section~\ref{sc:pre},
	we introduce some basics in optimality conditions for GNEPs, polyhedral homotopy and polynomial optimization.
	The algorithm for solving GNEPs of polynomials is proposed in Section~\ref{sc:alg}.
	We show the polyhedral homotopy continuation is optimal for GNEPs of polynomials with generic coefficients in Section~\ref{sc:mv}.
	Numerical experiments are presented in Section~\ref{sc:ne}.

	\section{Preliminaries}
	\label{sc:pre}
	
	In this section, preliminary concepts for GNEPs, polyhedral homotopy continuation and polynomial optimization problems are reviewed. We introduce the optimality conditions for GNEPs to derive a system of polynomials whose collection of solutions contains GNEs. Then, we review Bernstein's theorem, which gives an upper bound of the number of solutions for a system of polynomials. Finally, the polyhedral homotopy continuation and the Moment-SOS hierarchy of semidefinite relaxations are suggested as two main tools to solve GNEPs.

	\subsection{Optimality conditions for GNEPs}
	
	Under some suitable constraint qualifications (for example, the linear constraint qualification condition (LICQ), or Slater's Condition for convex problems; see \cite{Brks}),
	if $u\in X$ is a GNE, then there exist Lagrange multiplier vectors $\lambda_1,\ldots,\lambda_N$ such that
	\be
	\label{eq:KKTsystem}
	\left\{
	\begin{array}{ll}
		\nabla_{x_i} f_i(x)-\sum\limits_{j=1}^{m_i}\lambda_{i,j}\nabla_{x_i} g_{i,j}(x)=0, \, &(i\in\{1,\dots,N\})\\
		\lambda_i\perp g_i(x),\,g_{i,j}(x)=0,\,&(i\in\{1,\dots,N\},j\in\mathcal{E}_i)\\
		\lambda_{i,j}\ge0,\,g_{i,j}(x)\ge0,\,&(i\in\{1,\dots,N\},j\in\mathcal{I}_i)
	\end{array}
	\right.
	\ee
	where $\nabla_{x_i}f_i(x)$ is the gradient of $f_i$ with respect to $x_i$ and $\lambda_i\perp g_i(x)$ implies {that $\lambda_{i}^\top g_{i}(x)=0$ for the constraining vector
		\[ g_i(x)\,:=\,\left[\ g_{i,1}(x),\,\dots,\,g_{i,m_i}(x)\ \right]^\top.\]    
		The polynomial system  (\ref{eq:KKTsystem}) is called the \emph{KKT system} of this GNEP. 
		The solution $(x,\lambda_1\ddd\lambda_N)$ of the KKT system is called a \emph{KKT tuple}
		and the first block of coordinates $x$ is called a \emph{KKT point}.
		For the GNEP of polynomials,
		the LICQ of $\mbox{F}_i(u_{-i})$ hold at every GNE \cite{Nie2022degree}, under some genericity conditions.
		Moreover, consider the system consists of all equations in (\ref{eq:KKTsystem}),
		i.e., 
		\be
		\label{eq:KKTep}
		\left\{
		\begin{array}{ll}
			\nabla_{x_i} f_i(x)-\sum\limits_{j=1}^{m_i}\lambda_{i,j}\nabla_{x_i} g_{i,j}(x)=0, \, &(i\in\{1,\dots,N\})\\
			\lambda_i\perp g_i(x),\,g_{i,j}(x)=0,\,&(i\in\{1,\dots,N\},j\in\mathcal{E}_i).
		\end{array}
		\right.
		\ee
		Then, the $(x,\lambda_1\ddd\lambda_N)$ satisfying (\ref{eq:KKTep}) is called a \emph{complex KKT tuple},
		and similarly, the first coordinate $x$ is called a \emph{complex KKT point}.
		For generic GNEPs of polynomials,
		there are finitely many solutions to (\ref{eq:KKTep}) \cite{Nie2022degree}.
		In this case,
		the number of complex KKT points is called the \emph{algebraic degree of the GNEP}.
		
		\subsection{Mixed volumes and Bernstein's theorem}\label{subsection:mixedVolume}
		
		Let $\mathbb{C}[z_1,\dots,z_k]$ be the set of all complex coefficient polynomials in variables $z_1\ddd z_k$. 
		For a polynomial $p\in\mathbb{C}[z_1,\dots, z_k]$, suppose
		\[p=\sum_{a\in \N^k} c_a z^a\]
		where $z^a = z_1^{a_1}\cdots z_k^{a_k}$ for $a=(a_1,\dots, a_k)$.
		Then the \emph{support} of $p$, denoted by $A_p$,
		is the set of exponent vectors for monomials such that
		\[a\in A_p\quad \mbox{if and only if} \quad c_a\ne0.\]
		The convex hull \emph{$Q_p$} of the support $A_p$ is called the \emph{Newton polytope} of $p$. 
		For an integer vector $w\in \mathbb{Z}^k$,
		we define a map $h_w:\mathbb{Z}^k\to\mathbb{Z}$ such that
		\[h_w(a)=\langle w,a\rangle \quad \mbox{for all}\quad  a\in \mathbb{Z}^k.\]
		Given a finite integer lattice of points $A\subset \mathbb{Z}^k$,
		the minimum value of $h_w$ on $A$ is denoted by \emph{$h_w^\star(A)$}.
		When it is clear from the context,
		we may omit the index $w$ for $h$.
		Moreover, we let \emph{$A^w$}$:=\{a\in A\mid  h_w(a)=h_w^\star(A)\}$.
		Then,
		we define \emph{$p^w$} be the polynomial consists of terms of $p$ supported on $A^w$,
		i.e., for each $p=\sum_{a\in A}c_az^a\in\cpx[z_1,\dots, z_k]$, we have
		\[p^w=\sum_{a\in A^w}c_az^a.\]
		For an $m$-tuple of polynomials $\mathscr{P}=(p_1,\dots, p_m)$, 
		we denote $\mathscr{P}^w:=(p_1^w,\dots, p_m^w)$. 
		The $m$-tuple $\mathscr{P}^w$ is called the \emph{facial system} of $\mathscr{P}$ with respect to $w$. 
		The term `facial' comes from the fact that $A^w$ is a face of $A$ exposed by a vector $w$.
		
		Let $Q_1,\dots, Q_m$ be polytopes in $\mathbb{R}^k$, and $\alpha_1,\dots, \alpha_m$ be nonnegative real scalars. The \emph{Minkowski sum} of polytopes is
		$$\alpha_1Q_1+\cdots+\alpha_mQ_m:=\{\alpha_1v_1+\cdots+\alpha_nv_m\mid v_i\in Q_i\}.$$
		The volume of the Minkowski sum $\alpha_1Q_1+\cdots+\alpha_mQ_m$ can be understood as a homogeneous polynomial in variables of $\alpha_1,\dots,\alpha_m$.
		In particular, the coefficient for the term $\alpha_1\alpha_2\cdots \alpha_m$ in the volume of $\alpha_1Q_1+\cdots +\alpha_mQ_m$ is called the \emph{mixed volume} of $Q_1,\dots, Q_m$, which is denoted by \emph{$MV(Q_1,\dots, Q_m)$}.
		
		In \cite{Bernstein1975number}, it was proved that for a square polynomial system in $\cpx[z_1,\dots, z_k]$, the mixed volume of the system is an upper bound for the number of isolated roots in the complex torus $(\mathbb{C}\setminus \{\mathbf{0}\})^k$,
		where $\mathbf{0}$ is the all-zero vector.
		This is called \emph{Bernstein's theorem}.
		Moreover, 
		it states when the mixed volume bound is tight. 
		
		\begin{theorem}[Bernstein's theorem]\cite[Theorems A and B]{Bernstein1975number}\label{thm:BernsteinThm}
			Let $\mathscr{P}$ be a system consists of polynomials $p_1,\dots, p_k$ in $\mathbb{C}[z_1,\dots,z_k]$.  For each %support $A_{p_i}$ of $p_i$ and its 
			Newton polytope $Q_{p_i}$ of $p_i$, we have 
			\be\label{eq:berngeneric}
			(\text{the number of isolated roots for }\mathscr{P}\text{ in }(\mathbb{C}\setminus\{\boldsymbol{0}\})^k)\leq MV(Q_{p_1},\dots, Q_{p_k}).\ee
			The equality holds
			if and only if the facial system $\mathscr{P}^w$ has no root in $(\mathbb{C}\setminus\{\mathbf{0}\})^k$ for any nonzero $w\in \mathbb{Z}^k$.
		\end{theorem}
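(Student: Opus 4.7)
The plan is to prove the statement via the polyhedral homotopy method together with Puiseux-series analysis of path endpoints, first establishing the inequality and then characterizing equality. The genericity of a lifting function drives both parts.

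First, I would fix a generic integer lifting $\omega_i:A_{p_i}\to\mathbb{Z}$ for each $i$ and form the deformed system
\[
\tilde{p}_i(z,t) \;=\; \sum_{a\in A_{p_i}} c_{i,a}\, t^{\omega_i(a)}\, z^a, \qquad i=1,\dots,k,
\]
which recovers $\mathscr{P}$ at $t=1$. Genericity of $\omega$ guarantees that the lifted Newton polytopes $\widehat{Q}_{p_i}\subset\mathbb{R}^{k+1}$ induce a coherent fine mixed subdivision of the Minkowski sum $Q_{p_1}+\cdots+Q_{p_k}$. By a standard identity from mixed-subdivision theory, $MV(Q_{p_1},\dots,Q_{p_k})$ equals the sum of Euclidean volumes of the mixed cells in this subdivision. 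For each mixed cell, the associated start system is a binomial (monomial-matching) system whose generic coefficients yield exactly that cell's volume many simple roots in $(\cpx\setminus\{\boldsymbol{0}\})^k$. Tracking these roots as homotopy paths from $t=0$ to $t=1$ yields at most $MV(Q_{p_1},\dots,Q_{p_k})$ isolated roots of $\mathscr{P}$ in the algebraic torus, establishing \reff{eq:berngeneric}.

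For equality, the question becomes whether every homotopy path stays bounded and bounded away from the coordinate hyperplanes as $t\to 1$, versus diverging to infinity or to the boundary of the torus. Writing a path as a Puiseux series $z(t)=(t^{w_1}\zeta_1(t),\dots,t^{w_k}\zeta_k(t))$ with $\zeta_j(0)\in\cpx\setminus\{0\}$ and substituting into $\tilde{p}_i(z,t)=0$, the vanishing of the leading coefficient in $t$ forces $(\zeta_1(0),\dots,\zeta_k(0))$ to be a root of the facial system $\mathscr{P}^w$ in $(\cpx\setminus\{\boldsymbol{0}\})^k$, where $w=(w_1,\dots,w_k)$ records the leading exponents. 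A path escapes the torus precisely when $w\neq\boldsymbol{0}$; hence equality in \reff{eq:berngeneric} holds if and only if no nonzero $w\in\mathbb{Z}^k$ admits such a root, which is the claimed equivalence.

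The main obstacle I anticipate is the combinatorial-cum-analytic step connecting mixed cells with homotopy path counts. Verifying that a generic lifting produces a subdivision in which every mixed cell contributes exactly its volume in simple roots of the associated start system (rather than fewer roots of higher multiplicity) requires careful genericity arguments, and justifying that every divergent path is detected by some facial system $\mathscr{P}^w$---ruling out more exotic asymptotic behavior such as fractional or spiraling escapes---demands executing the Puiseux-series expansion with care. Bridging local Puiseux-series analysis near $t=0$ and global path-tracking over $t\in[0,1]$, and verifying that every isolated root at $t=1$ is attained as a genuine path endpoint, is where the bulk of the technical work lies.
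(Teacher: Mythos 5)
The paper offers no internal proof to compare against: Theorem~\ref{thm:BernsteinThm} is quoted directly from Bernstein's 1975 paper (Theorems A and B) and used as a black box, so your proposal must be judged on its own merits. Your inequality half is a reasonable sketch of the Huber--Sturmfels polyhedral-homotopy route (generic lifting, coherent fine mixed subdivision, binomial start systems with total root count $MV(Q_{p_1},\dots,Q_{p_k})$), which is indeed a standard way to recover the bound \reff{eq:berngeneric}.

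The equality characterization, however, contains a genuine gap. Your Puiseux analysis is anchored at $t=0$: for a series solution $z(t)=(t^{w_1}\zeta_1(t),\dots,t^{w_k}\zeta_k(t))$ of the lifted system, the lowest-order coefficient in $t$ of $\tilde{p}_i(z(t),t)$ is supported on the exponents minimizing $\langle w,a\rangle+\omega_i(a)$, so what your computation forces $\zeta(0)$ to solve is a facial system of the \emph{lifted} polytopes --- for generic $\omega$ exactly the binomial start system of a mixed cell --- and not $\mathscr{P}^w$, whose support minimizes $\langle w,a\rangle$ alone. The claim that a path ``escapes the torus precisely when $w\neq\boldsymbol{0}$'' is consequently false: every start solution at $t=0$ has a nonzero leading-exponent vector (the inner normal of its mixed cell), yet it spawns a path that remains in the torus for all $t\in(0,1]$. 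Bernstein's equality criterion concerns degeneration at the target, where the factors $t^{\omega_i(a)}$ all equal $1$ and the lifting is invisible; detecting the paths that leave the torus as $t\to 1$ requires an expansion in a local parameter at $t=1$, or equivalently a compactness argument in the toric variety associated to $Q_{p_1}+\cdots+Q_{p_k}$, in which boundary limit points of root paths are toric roots of $\mathscr{P}^w$ for $w$ in the normal cone of the corresponding face. Even with that repaired, you would obtain only one implication (no facial toric roots implies no escaping paths implies equality --- and this still needs the endpoint-capture step you flag but do not execute, since $t=1$ may be a non-generic parameter value and one must show each isolated root of $\mathscr{P}$ absorbs at least one path, via a multiplicity or cluster argument); the converse direction, that a toric root of some facial system forces the count to fall strictly below the mixed volume or destroys isolatedness, is the harder half of Bernstein's Theorem B and is nowhere argued in your proposal.
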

		It is worth noting that Bernstein's theorem concerns roots in the torus $(\mathbb{C}\setminus \{\mathbf{0}\})^k$ because it allows considering Laurent polynomials which are possible to have negative exponents. A system that satisfies the equality in the above theorem is called \emph{Bernstein generic}. 
		
		\subsection{Polyhedral homotopy continuation}\label{sc:prehom}
		The \emph{homotopy continuation} is an algorithmic method to find numerical approximations of roots for a system of polynomial equations. 
		Consider a square system of polynomial equations $\mathscr{P}\,\coloneqq\, \{p_1\ddd p_k\}\subset\mathbb{C}[z_1,\dots, z_k]$ with $k$ equations and $k$ variables. We are interested in solving the system $\mathscr{P}$, i.e., computing a zero set
		\[\mathbf{V}(\mathscr{P}):=\{z\in \mathbb{C}^k\mid p_1(z)=\cdots=p_k(z)=0\}.\]
		The main idea for the homotopy continuation is solving $\mathscr{P}$ by tracking a homotopy path from the known roots of a system $\mathscr{Q}$, called a \emph{start system}, to that of the \emph{target system} $\mathscr{P}$.
		Given the start system $\mathscr{Q}=\{q_1,\dots, q_k\}\subset\mathbb{C}[z_1,\dots, z_k]$ with the same number of variables and equations of $\mathscr{P}$, we construct a homotopy $\mathscr{H}(z,t)$ such that $\mathscr{H}(z,0)=\mathscr{Q}$ and $\mathscr{H}(z,1)=\mathscr{P}$. 
		For tracking the homotopy from $t=0$ to $t=1$, {numerical ODE solving techniques for \emph{Davidenko equations} and Newton's iteration are applied; see \cite[Chapter 2]{SommeseWampler:2005} for more details.}
		
		Choosing a proper start system is an important task for the homotopy continuation as it determines the number of paths to track. In this paper, the \emph{polyhedral homotopy continuation} established by Huber and Sturmfels \cite{huber1995polyhedral} is considered. For each polynomial $p_i$ in $\mathscr{P}$ with its Newton polytope $Q_{p_i}$, the polyhedral homotopy continuation constructs a start system $\mathscr{Q}$ with the mixed volume $MV(Q_{p_1},\dots, Q_{p_k})$ many solutions. We briefly introduce the framework of the polyhedral homotopy continuation. For a polynomial $p\in \mathbb{C}[z_1,\dots, z_k]$ and its support set $A_{p}$, we know that
		\[p(z) = \sum\limits_{a\in A_p}c_az^a.\]
		Let $\ell_p:A_p\rightarrow \mathbb{R} $ be a function defined on every lattice point in $A_p$,
		we define
		\be\label{eq:lifting_p}
		\overline{p}(z,t)=\sum\limits_{a\in A_p}c_az^a t^{\ell_p(a)}\ee
		which is called a \emph{lifted polynomial} of $p$ by the lifting function $\ell_p$.
		Lifting all polynomials $p_1,\dots, p_k$ in $\mathscr{P}$ gives a lifted system $\overline{\mathscr{P}}(z,t)$. Note that $\overline{\mathscr{P}}(z,1)=\mathscr{P}$. A solution of $\overline{\mathscr{P}}$ can be expressed by a Puiseux series $z(t)=(z_1(t),\dots, z_k(t))$ where 
		\[z_i(t)=t^{\alpha_i}y_i+ \text{(higher order terms)}\]
		for some rational number $\alpha_i$ and a nonzero constant $y_i$. As $z(t)$ is a solution for the lifted system, plugging $z(t)$ into each $p_j$ gives
		\[\overline{p}_j(z(t),t) =\sum\limits_{a\in A_{p_j}}c_a y_i^a t^{\langle a,\alpha\rangle+\ell_{p_j}(a)}+\text{(higher order terms)}.\] 
		Dividing by $t^{\langle a,\alpha\rangle +\ell_{p_j}(a)}$ and letting $t=0$, we have a start system $\mathscr{Q}$. 
		The solutions for $\mathscr{Q}$ can be obtained from the branches of the algebraic function $z(t)$ near $t=0$. 
		The homotopy $\mathscr{H}(z,t)$ joining $\mathscr{P}$ and $\mathscr{Q}$ has %tracks
		$MV(Q_{p_1},\dots, Q_{p_k})$ many paths as $t$ varies from $0$ to $1$. 
		It is motivated from Theorem \ref{thm:BernsteinThm} that a polynomial system supported on $A_{p_1},\dots, A_{p_k}$ has at most $MV(Q_{p_1},\dots, Q_{p_k})$ many isolated solutions in the torus $(\mathbb{C}\setminus \mathbf{0})^k$. 
		Polyhedral homotopy continuation is implemented robustly in many software \texttt{HOM4PS2} \cite{lee2008hom4ps}, \texttt{PHCpack} \cite{verschelde1999algorithm} and \texttt{HomotopyContinuation.jl} \cite{breiding2018homotopycontinuation}.
		
		\begin{remark}\label{rmk:homotopyRemarks}
			\begin{enumerate}
				\item Even in the case that the number of solutions is smaller than the mixed volume, the polyhedral homotopy continuation algorithm may find all complex solutions.
				\item For GNEPs, there may exist complex KKT tuples outside of the torus.
				For instance, when there are KKT points with inactive constraints, Lagrange multipliers according to these constraints are $0$.
				Theoretically, the polyhedral homotopy continuation aims on finding roots in the torus $(\mathbb{C}\setminus \{\boldsymbol{0}\})^k$.
				However, actual implementations are designed to find roots outside the torus by adding a small perturbation on the constant term;
				see \cite{li1996bkk} for details.
				In Example~\ref{ep:uncNEP}, we give an example where the homotopy continuation successfully finds all complex solutions to a system, 
				while the mixed volume is strictly greater than the number of solutions, and there exist roots outside the torus.
			\end{enumerate}
		\end{remark}
		
		In summary, we give the general framework of polyhedral homotopy continuation for solving a polynomial system in the following:
		\begin{alg}
			\label{ag:polyhedralHomotopy} \rm
			For a system of polynomial equations $\mathscr{P}=\{p_1,\dots, p_k\}$,
			do the following:
			\begin{itemize}
				
				\item [Step~1]
				For each $i=1\ddd k$,
				choose a function $\ell_{p_i}:A_{p_i}\rightarrow \mathbb{R}$.
				Then, construct the lifted polynomial $\bar{p}_i(z,t)$ as in (\ref{eq:lifting_p}),
				and define $\overline{\mathscr{P}}(z,t):=\{\bar{p}_1(z,t)\ddd \bar{p}_k(z,t)\}$.
				
				\item [Step~2]
				Construct a start system $\mathscr{Q}$ from the lifted system $\overline{\mathscr{P}}$ by trimming some powers of $t$ and letting $t=0$.
				
				\item [Step~3]
				Starting from $\mathscr{Q}$, track $MV(Q_{p_1},\dots, Q_{p_k})$ many paths from $t=0$ to $t=1$ with Puiseux series solutions $z(t)$ obtained near $t=0$.

			\end{itemize}
		\end{alg}

		As the polyhedral homotopy continuation approximates the roots of a system numerically, 
		{\it a posteriori} certifications are usually applied to verify the output obtained by numerical solvers,
		such as the Smale's $\alpha$-theory \cite[Chapter 8]{blum2012complexity} and interval arithmetic \cite[Chapter 8]{moore2009introduction}. 
		There are multiple known implementations for these methods. For $\alpha$-theory certification, one can use \texttt{alphaCertified} \cite{hauenstein2011alphacertified} or \texttt{NumericalCertification} \cite{https://doi.org/10.48550/arxiv.2208.01784}.
		For certification using interval arithmetic,  software \texttt{NumericalCertification} and \texttt{HomotopyContinuation.jl}  \cite{breiding2020certifying} are available.

		\subsection{Basic concepts in polynomial optimization}
		For the set of real polynomials $\mc{H} = \{h_1, \ldots, h_s\}$ in $z:=(z_1,\dots, z_k)$, the ideal generated by $\mc{H}$ is
		\[
		\idl[\mc{H}]:= h_1\cdot\mathbb{R}[z] + \cdots + h_s \cdot \mathbb{R}[z].
		\]
		For a nonnegative integer $d$,
		the $d$-truncation of $\idl[\mc{H}]$ is
		\[
		\idl[\mc{H}]_{d} \, := \idl[\mc{H}]\cap\re[z]_{d}.
		\]
		A polynomial $\sigma \in \re[z]$ is said to be a \emph{sum of squares (SOS)}
		if $\sigma = \sigma_1^2+\cdots+\sigma_l^2$ for some polynomials $\sigma_i \in\nfR[z]$.
		The set of all SOS polynomials in $z$ is denoted as $\Sigma[z]$.
		For a degree $d$, we denote the truncation
		\[
		\Sigma[z]_d \, := \, \Sigma[z] \cap \nfR[z]_d.
		\]
		For a set {$\mc{Q}=\{q_1,\ldots,q_t\}$} of polynomials in $z$,
		its quadratic module is the set
		\[
		\qmod[\mc{Q}] \, := \,  \Sigma[z] +  q_1 \cdot \Sigma[z] + \cdots + q_t \cdot  \Sigma[z].
		\]
		Similarly, we denote the truncation of $\qmod[\mc{Q}]$
		\[
		\qmod[\mc{Q}]_{2d} \, := \, \Sigma[z]_{2d} + q_1\cdot \Sigma[z]_{2d-\deg(q_1)}
		+\cdots+q_t\cdot\Sigma[z]_{2d-\deg(q_t)}.
		\]
		The tuple $\mc{Q}$ determines the basic closed semi-algebraic set
		\begin{equation*}
			\mathcal{S}(\mc{Q}) \, := \,  \{z \in \nfR^k\mid q_1(z) \ge  0, \ldots, q_t(z) \ge  0  \}.
		\end{equation*}
		Moreover, for {$\mc{H}=\{h_1\ddd h_s\}$}, its real zero set is
		\[\mathbf{V}_\mathbb{R}(\mc{H}) := \mathbf{V}(\mc{H})\cap \mathbb{R}^k= \{z \in\re^k\mid h_1(z)=\cdots=h_s(z)=0\}.\]
		The set $\idl[\mc{H}]+\qmod[\mc{Q}]$ is said to be \emph{archimedean}
		if there exists $\rho \in \idl[\mc{H}]+\qmod[\mc{Q}]$ such that the set $\mathcal{S}(\{\rho\})$ is compact.
		If $\idl[\mc{H}]+\qmod[\mc{Q}]$ is archimedean, then
		$\mathbf{V}_\mathbb{R}(\mc{H})\cap\mathcal{S}(\mc{Q})$ must be compact.
		Conversely, if $\mathbf{V}_\mathbb{R}(\mc{H})\cap\mathcal{S}(\mc{Q})$ is compact, say,
		$\mathbf{V}_\mathbb{R}(\mc{H})\cap\mathcal{S}(\mc{Q})$ is contained in the ball $R -\|z\|^2 \ge 0$,
		then $\idl[\mc{H}]+\qmod[\mc{Q}\cup \{R -\|z\|^2\}]$ is archimedean and $\mathbf{V}_\mathbb{R}(\mc{H})\cap\mathcal{S}(\mc{Q}) = \mathbf{V}_\mathbb{R}(\mc{H})\cap\mathcal{S}(\mc{Q}\cup \{R -\|z\|^2\})$.
		Clearly, if $f \in \idl[\mc{H}]+\qmod[\mc{Q}]$, then
		$f \ge 0$ on $\mathbf{V}_\mathbb{R}(\mc{H}) \cap \mathcal{S}(\mc{Q})$.
		The reverse is not necessarily true.
		However, when $\idl[\mc{H}]+\qmod[\mc{Q}]$ is archimedean,
		if the polynomial $f > 0$ on $\mathbf{V}_\mathbb{R}(\mc{H})\cap\mathcal{S}(\mc{Q})$, then $f \in\idl[\mc{H}]+\qmod[\mc{Q}]$.
		This conclusion is referenced as Putinar's Positivstellensatz \cite{putinar1993positive}.
		Interestingly, if $f \ge 0$ on $\mathbf{V}_\mathbb{R}(\mc{H})\cap\mathcal{S}(\mc{Q})$,
		we also have $f\in \idl[\mc{H}]+\qmod[\mc{Q}]$,
		under some standard optimality conditions \cite{nie2014optimality}.
		
		\emph{Truncated multi-sequences} (tms) are useful for characterizing the duality of nonnegative polynomials.
		For an integer $d\ge 0$, a real vector
		$y=(y_{\alpha})_{\alpha\in\mathbb{N}_{2d}^k}$ is called a
		tms of degree $2d$.
		For a polynomial $p(z) = \sum_{ \af \in \N^k_{2d} } p_\af z^\af$,
		define the operation
		\be \label{<p,y>}
		\langle p, y \rangle \,  \coloneqq  \,
		{\sum}_{ \af \in \N^k_{2d} } p_\af y_\af.
		\ee
		The operation $\langle p, y \rangle$ is bilinear in $p$ and $y$.
		Moreover, for a polynomial $q \in \re[x]_{2s}$ ($s \le d$),
		and a degree $t\le d - \lceil \deg(q)/2 \rceil$,
		the $d$th order {\it localizing matrix} of $q$ for $y$
		is the symmetric matrix $L_{q}^{(d)}[y]$ such that
		(the $vec(a)$ denotes the coefficient vector of $a$)
		\be \label{df:Lf[y]}
		\langle qa^2, y \rangle  \, =  \,
		vec(a)^T \big( L_{q}^{(d)}[y]  \big) vec(a)
		\ee
		for all $a \in \re[x]_t$.
		When $q=1$ (the constant one polynomial),
		the localizing matrix $L_{q}^{(d)}[y]$
		becomes the {$d$th order} \textit{moment matrix}
		$M_t[y]  \coloneqq   L_{1}^{(d)}[y]$.
		We refer to \cite{LauICM,laserre2015introduction,HenLas05} for more details and applications about tms and localizing matrices.
		In Section~\ref{sc:momsos}, SOS polynomials and localizing matrices are exploited for solving polynomial optimization problems.
		
		\section{An algorithm for finding GNEs}
		\label{sc:alg}
		In this section, we study an algorithm for finding GNEs based on the polyhedral homotopy continuation and the Moment-SOS semidefinite relaxations. First, we propose a framework for solving GNEPs.
		Then, we discuss the polyhedral homotopy continuation for solving the complex KKT systems for GNEPs of polynomials,
		and the Moment-SOS relaxations for selecting GNEs from the set of KKT points.
		
		For the GNEP of polynomials,
		we consider its complex KKT system (\ref{eq:KKTep}).
		Let $m:=m_1+\cdots+m_N$ and define $\mc{K}_{\cpx}\subseteq\cpx^n\times\cpx^m$ as a finite set of complex KKT tuples,
		i.e.,  every point in $\mc{K}_{\cpx}$ solves the system (\ref{eq:KKTep}).
		We further define
		\[\mc{K}:=\left\{(x,\lambda)\in\mc{K}_{\cpx}\cap (\re^n\times\re^m)\left|
		\begin{array}{c}
			\lambda_{i,j}\ge0, g_{i,j}(x)\ge0\\
			\forall \, i\in\{1,\dots, N\},\, j\in\mc{I}_i 
		\end{array}\right.\right\},\]
		\be\label{eq:mcP} 
		\mc{P}:=\{x\in\re^n\mid \text{there is } \lambda\in\re^m\ \text{ such that } (x,\lmd)\in\mc{K}\}.\ee
		Then, $\mc{K}$ and $\mc{P}$ are sets of KKT tuples and KKT points respectively.
		When the GNEP is convex,
		all points in $\mc{P}$ are GNEs.
		Furthermore, when $\mc{K}_{\cpx}$ is the set of all complex KKT tuples and some constraint qualifications hold at every GNE,
		the $\mc{P}$ is the set of all GNEs if it is nonempty,
		or the nonexistence of GNEs can be certified by the emptiness of $\mc{P}$.
		However, when there is no convexity assumed for the GNEP,
		the KKT conditions are usually not sufficient for $x\in\mc{P}$ being a GNE.
		
		Suppose the GNEP is not convex.
		Let $u=(u_i,\umi)\in\mc{P}$.
		For each $i\in \{1,\dots, N\}$, we consider the following optimization problem:
		\be
		\label{eq:checking}
		\left\{ \begin{array}{ccl}
			\delta_i:= & \min\limits_{\xpi\in \Rni}  &  \fpi(x_i,u_{-i})-\fpi(u_i,u_{-i}) \\
			& \st & g_{i,j}(x_i,u_{-i})  = 0 \ (j\in\mc{E}_i), \\
			&     & g_{i,j}(x_i,u_{-i})  \ge 0 \ (j\in\mc{I}_i).
		\end{array} \right.
		\ee
		Then, $u$ is a GNE if and only if every $\delta_i\ge0$,
		i.e.,  $u_i$ minimizes (\ref{eq:checking}) for each $i$.
		If $\delta_i<0$ for some $i$,
		then $u$ is not a GNE.
		For such a case, suppose (\ref{eq:checking}) has a minimizer $v_i$.
		Then it is clear that \be\label{eq:vi-} 
		f_i(v_i,\xmi)-f_i(\xpi,\xmi)\ge0\ee
		holds with $x=x^\star$ at any GNE $x^\star\in\mc{P}$ such that $v_i\in X_i(\xmi^\star)$ (e.g., the $v_i\in X_i(\xmi^\star)$ holds at all NEs for NEPs).
		However, (\ref{eq:vi-}) does not hold at $x=u$.
		Therefore, we propose the following algorithm for finding GNEs.
		\begin{alg}
			\label{ag:selectGNE} \rm
			For the GNEP of polynomials, do the following:
			
			\begin{itemize}
				
				\item [Step~0] Let $S:=\emptyset$ and $V_i:=\emptyset$ for all $i\in\{1,\dots, N\}$. 
				
				\item [Step~1] Solve the complex KKT system (\ref{eq:KKTep}) for a set of complex solutions $\mc{K}_{\cpx}$.
				Let $\mc{P}$ be the set given as in (\ref{eq:mcP}).
				\item [Step~2]
				If $\mc{P}\ne \emptyset$,
				then select $u\in\mc{P}$, let $\mc{P}:=\mc{P}\setminus\{u\}$, and proceed to the next step.
				Otherwise, output the set $S$ (possibly empty) of GNEs and stop.
				
				\item [Step~3]
				If $V_i=\emptyset$ for all $i\in\{1,\dots, N\}$, or $f_i(v_i,\umi)-f_i(u_i,\umi)\ge0$ for all $i\in\{1,\dots, N\}$ and for all $v_i\in V_i\cap X_i(\umi)$,
				then go to the next step.
				Otherwise, go back to Step~2.
				
				\item [Step~4]
				For each $i\in\{1,\dots, N\}$, solve the polynomial optimization problem (\ref{eq:checking}) for a minimizer $v_i$.
				If there exists $i\in\{1,\dots, N\}$ such that $\dt_i<0$, let $V_i:=V_i\cup\{v_i\}$ for all such $i$.
				Otherwise, $u$ is a GNE and let $S:=S\cup\mc\{u\}$.
				Then, go back to Step~2.
			\end{itemize}
		\end{alg}
		
		In Section~\ref{sc:homotopy},
		we show how to find the set of complex solutions for the system (\ref{eq:KKTep}) using the polyhedral homotopy continuation.
		Since the polyhedral homotopy tracks mixed volume many paths,
		$\mc{P}$ must be a finite set (possibly empty).
		Therefore, Algorithm~\ref{ag:selectGNE} must terminate within finitely many loops.
		Moreover, if $\mc{K}_{\cpx}$ is the set of all complex KKT tuples,
		i.e.,  Algorithm~\ref{ag:polyhedralHomotopy} finds all complex solutions for (\ref{eq:KKTep}),
		then Algorithm~\ref{ag:selectGNE} will either find all GNEs,
		or detect the nonexistence of GNEs.
		This is the case when Algorithm~\ref{ag:polyhedralHomotopy} finds the mixed volume many complex solutions for (\ref{eq:KKTep}).
		In Section~\ref{sc:mv},
		we show that when the GNEP is defined by generic dense polynomials,
		Algorithm~\ref{ag:polyhedralHomotopy} can find mixed volume many solutions for (\ref{eq:KKTep}).
		The following result is straightforward:
		\begin{theorem}
			\label{tm:mvfound}
			For the GNEP,
			suppose $|\mc{K}_{\cpx}|$ equals the mixed volume of (\ref{eq:KKTep}).
			If $S$ produced by Algorithm~\ref{ag:selectGNE} is nonempty,
			then $S$ is the set of all GNEs.
			Otherwise, the GNEP does not have any GNE.  
		\end{theorem}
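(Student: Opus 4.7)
The plan is to chain three ingredients already in place: (i) the hypothesis $|\mc{K}_\cpx| = MV$ forces $\mc{K}_\cpx$ to capture every complex KKT tuple; (ii) every GNE, under the constraint qualification invoked in the paper, is represented in $\mc{P}$; and (iii) Algorithm~\ref{ag:selectGNE}'s inner loop correctly certifies which candidates in $\mc{P}$ are GNEs.

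First, I would argue that $\mc{K}_\cpx$ contains every solution of (\ref{eq:KKTep}). Bernstein's theorem (Theorem~\ref{thm:BernsteinThm}) bounds the number of isolated roots of the KKT system in the torus $(\cpx\setminus\{\mathbf{0}\})^{n+m}$ by $MV$, so equality of $|\mc{K}_\cpx|$ with the mixed volume forces $\mc{K}_\cpx$ to contain every such isolated torus root. Roots outside the torus, which arise precisely at KKT points with vanishing multipliers, are obtained by the standard constant-term perturbation used in polyhedral homotopy implementations, as recalled in Remark~\ref{rmk:homotopyRemarks}. Hence $\mc{K}_\cpx$ coincides with the full set of complex KKT tuples.

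Next I would verify $\mc{P}$ contains every GNE. Under the LICQ (which the paper notes holds at every GNE of a generic GNEP of polynomials), any GNE $u^\star$ admits Lagrange multipliers $\lambda^\star$ with $\lambda^\star_{i,j}\ge 0$ and $g_{i,j}(u^\star)\ge 0$ for $j\in\mc{I}_i$ such that $(u^\star,\lambda^\star)$ solves (\ref{eq:KKTep}). By the previous paragraph, $(u^\star,\lambda^\star)\in\mc{K}_\cpx$, so $(u^\star,\lambda^\star)\in\mc{K}$, and therefore $u^\star\in\mc{P}$. Finally, the inner loop: by construction of (\ref{eq:checking}), a candidate $u\in\mc{P}$ is a GNE iff every $\delta_i\ge 0$, and Step~4 places $u$ in $S$ exactly in that case. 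The shortcut in Step~3 only bypasses $u$ when some stored $v_i\in V_i\cap X_i(u_{-i})$ satisfies $f_i(v_i,u_{-i})<f_i(u_i,u_{-i})$, which itself witnesses that $u_i$ does not minimize $\mbF_i(u_{-i})$; thus no actual GNE is ever discarded. Consequently $S$ equals the set of GNEs in $\mc{P}$, which by the previous step is the set of all GNEs, and the second conclusion follows by contraposition.

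The only delicate point is the first paragraph: one must be confident that what the polyhedral homotopy implementation actually returns matches the collection of all complex KKT tuples, not merely the isolated torus roots counted by Bernstein's bound. This is where the perturbation trick cited in Remark~\ref{rmk:homotopyRemarks} is essential, and it is reinforced by the later sections that show, for generic dense-coefficient GNEPs, the mixed volume coincides with the algebraic degree of the system (\ref{eq:KKTep}).
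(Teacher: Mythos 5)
Your proposal is correct and follows essentially the same route as the paper, which states Theorem~\ref{tm:mvfound} as a straightforward consequence of the discussion preceding it: $|\mc{K}_{\cpx}|$ equal to the mixed volume means the polyhedral homotopy found all complex KKT tuples, every GNE lies in $\mc{P}$ under the constraint qualifications invoked earlier, and Steps~3--4 of Algorithm~\ref{ag:selectGNE} discard a candidate only when a feasible $v_i$ witnesses that it is not a GNE. Your write-up merely makes explicit the same ingredients the paper leaves implicit (Bernstein's bound, the non-torus roots handled via the perturbation of Remark~\ref{rmk:homotopyRemarks}, and LICQ at GNEs via \cite{Nie2022degree}), including correctly flagging the torus-versus-affine-roots subtlety that the paper likewise delegates to that remark.
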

		
		\subsection{The polyhedral homotopy method for finding KKT tuples}
		\label{sc:homotopy}
		In this subsection, we explain how the polyhedral homotopy continuation is applied to find complex solutions for the system (\ref{eq:KKTep}).
		For each $i\in \{1,\dots, N\}$, denote the set of polynomials in variables of $x$ and $\lmd_i$ as
		\begin{multline*}
			F_i(x,\lmd_i):=
			\left\{\nabla_{x_i} f_i(x)-\sum\nolimits_{j=1}^{m_i}\lambda_{i,j}\nabla_{x_i} g_{i,j}(x)\right\}\\
			\cup \left\{\lambda_{i,j}g_{i,j}(x)\mid j\in\mc{I}_i\right\}\cup\left\{g_{i,j}(x)\mid j\in\mathcal{E}_i\right\}.
		\end{multline*}
		We define a system
		\be\label{eq:Fx} 
		F(x,\lmd):=\bigcup_{i=1}^N F_i(x,\lmd_i).\ee
		Then, $F(x,\lambda)$ is a system with $n+m$ polynomial equations in $n+m$ variables,
		and we use Algorithm~\ref{ag:polyhedralHomotopy} to solve $F(x,\lambda)=0$ by letting $z:=(x,\lmd)$ and $\mathscr{P}(z):=F(z)$.

		The example below shows detail of applying the homotopy method for finding KKT tuples from an actual NEP problem.
		\begin{example}\label{ep:uncNEP}
			Consider the two-player unconstrained NEP
			\[
			\begin{array}{cllcl}
				\min\limits_{x_{1} \in \re^1}& \frac{1}{2}x_1^2x_2^3-x_1x_2^2-2x_1x_2 &\vline&
				\min\limits_{x_{2} \in \re^1}& \frac{1}{2}x_1^3x_2^2-x_1^2x_2-2x_1x_2.
			\end{array}
			\]
			For this problem,
			the complex KKT system reduces to vanishing the gradients $\nabla_{x_1}f_1$ and $\nabla_{x_2}f_2$,
			i.e., 
			we have \[F=\{\nabla_{x_1}f_1,\nabla_{x_2}f_2\}=\{x_1x_2^3-x_2^2-2x_2,x_1^3x_2-x_1^2-2x_1\}.\]
			Considering a lifted system with generic lifting functions $\ell_{f_1}$ and $\ell_{f_2}$, we have
			\[\begin{array}{l}
				\overline{F}(x,t)=\{x_1x_2^3t^{\ell_{f_1}(1,3)}-x_2^2t^{\ell_{f_1}(0,2)}-2x_2t^{\ell_{f_1}(0,1)},\\
				\qquad \qquad \qquad \qquad x_1^3x_2t^{\ell_{f_2}(3,1)}-x_1^2t^{\ell_{f_2}(2,0)}-2x_1t^{\ell_{f_2}(1,0)}\}
			\end{array}\]
			such that $\overline{F}(x,1)=F$. Also, we get a start system $\overline{F}(x,0)$ after trimming some powers of $t$.
			The system $F$ has the mixed volume equal to $8$.
			Therefore, the polyhedral homotopy continuation provided $8$ paths to track and found $6$ numerical solutions to $F$:
			\[(x_1,x_2):=\left\{\begin{array}{l}
				(-0.76069+0.857874i ,-0.76069+0.857874i),\\
				(-0.76069-0.857874i ,-0.76069-0.857874i),\\
				(1.52138,1.52138),\ (0,-2),\ (-2,0),\ (0,0).
			\end{array} \right.
			\]
			% of $F$ are found.
			Indeed, using the software {\tt Macaulay2} \cite{macaulay2},
			we verified that the system $F(x)=0$ has exactly $6$ complex solutions.
		\end{example}
		
		\begin{remark}
			\begin{enumerate}
				\item Note that the system $F(x)=0$ has $6$ complex solutions, which is strictly less than its mixed volume.
				This shows that the polyhedral homotopy continuation may find all complex solutions to the polynomial system even if the number of solutions is smaller than the mixed volume. 
				\item As presented in this example,
				the polyhedral homotopy continuation may find solutions outside the torus in the actual implementation.
			\end{enumerate}
		\end{remark}
		
		In Section~\ref{sc:mv}, we show that under the genericity assumption, the polyhedral homotopy continuation provides the optimal number of paths for finding complex KKT points. 
		In this case, the polyhedral homotopy continuation guarantees finding all complex KKT points,
		hence the complete collection of GNEs can be obtained by Algorithm~\ref{ag:selectGNE}.
		
		\subsection{The Moment-SOS relaxation for selecting GNEs}\label{sc:momsos}
		In this sequel,
		we discuss how to solve the polynomial optimization (\ref{eq:checking}).
		For each $i$, denote 
		\[\theta_i(x_i):=\fpi(x_i,u_{-i})-\fpi(u_i,u_{-i}),\]
		\[\Phi_i(x_i):=\{g_{i,j}(x_i,\umi)\mid j\in\mc{E}_i\},\]
		\[\Psi_i(x_i):=\{g_{i,j}(x_i,\umi)\mid j\in\mc{I}_i\}.\]
		Denote the degree
		\be\label{eq:d0}
		d_i:=\max\{\lceil\deg(\theta_i)/2\rceil, \lceil\deg(\Phi_i(x_i))/2\rceil,\lceil\deg(\Psi_i(x_i))/2\rceil\},
		\ee
		{where $\deg(\Phi_i(x_i)):=\max\{\deg(g_{i,j}(x_i,u_{-i}))\mid j\in \mc{E}_i\}$,
			and $\deg(\Psi_i(x_i))$ is similarly defined.}
		For $d\ge d_i$, recall that the tms $y\in\mathbb{R}^{\mathbb{N}^{n_i}_{2d}}$ and localizing matrices $L_q^{(d)}[y]$ are given by (\ref{<p,y>}) and (\ref{df:Lf[y]}) respectively.
		The $d$th moment relaxation for \reff{eq:checking} is
		\be
		\label{eq:d-mom}
		\left\{
		\baray{rl}
		\vartheta_i^{(d)}  \, :=  \,\min\limits_{ y\in\mathbb{R}^{\mathbb{N}^{n_i}_{2d}} } & \lip \theta_i ,y \rip\\
		\st  & L_{p}^{(d)}[y] \succeq 0 \, (p\in\Phi_i),\ L_{q}^{(d)}[y] = 0 \, (q\in\Psi_i),\\
		& y_0=1, \, M_d[y] \succeq 0,  \\
		\earay
		\right.
		\ee
		Its dual optimization problem is the $d$th SOS relaxation
		\be
		\label{eq:d-sos}
		\left\{
		\baray{ll}
		\max & \gamma\\
		\st & \theta_i -\gamma \in \idl(\Psi_i)_{2d}+ \qmod(\Phi_i)_{2d}. \\
		\earay
		\right.
		\ee
		Both \reff{eq:d-mom}-\reff{eq:d-sos} are semidefinite programs that can be efficiently solved by some well-developed methods and software (e.g., {\tt SeDuMi} \cite{SeDuMi}).
		By solving the relaxations \reff{eq:d-mom}-\reff{eq:d-sos}
		for $d=d_0, d_0+1, \ldots$, we get the following algorithm, named Moment-SOS hierarchy \cite{Las01},
		for solving (\ref{eq:checking}).
		
		\begin{alg} \label{ag:checkingsdp} \rm
			For the given $u \in \mc{P}$
			and the $i$th player's optimization \reff{eq:checking}. Initialize $d := d_i$.
			\begin{itemize}
				
				\item [Step~1]
				Solve the moment relaxation (\ref{eq:d-mom})
				for the minimum value $\vartheta_i^{(d)}$ and a minimizer $y^\star$.
				If $\vartheta_i^{(d)}\ge0$,
				then $\delta_i=0$ and stop;
				otherwise, go to the next step.
				
				\item [Step~2]
				Let $t:=d_i$ as in \reff{eq:d0}.
				If $y^\star$ satisfies the rank condition
				\be \label{eq:flatrank}
				\Rank{M_t[y^\star]} \,=\, \Rank{M_{t-d_i}[y^\star]} ,
				\ee
				then extract a set $U_i$ of $r :=\Rank{M_t(y^\star)}$ minimizers for \reff{eq:checking} and stop.
				
				\item [Step~3]
				If \reff{eq:flatrank} fails to hold and $t < d$,
				let $t := t+1$ and then go to Step~2;
				otherwise, let $d := d+1$ and go to Step~1.
				
			\end{itemize}
		\end{alg}

		The rank condition \reff{eq:flatrank} is called {\it flat truncation} \cite{nie2013certifying}.
		It is a sufficient (and almost necessary) condition for checking finite convergence of the Moment-SOS hierarchy.
		Indeed, the Moment-SOS hierarchy has finite convergence
		if and only if the flat truncation is satisfied for some relaxation orders,
		under some generic conditions~\cite{nie2013certifying}.
		When \reff{eq:flatrank} holds, the method in \cite{HenLas05}
		can be used to extract $r$ minimizers for (\ref{eq:checking}).
		The method is implemented in the software {\tt GloptiPoly3}~\cite{GloPol3}.
		We refer to \cite{HenLas05,nie2013certifying} and
		\cite[Chapter 6]{laserre2015introduction} for more details.

		The convergence properties of Algorithm~\ref{ag:checkingsdp} are as follows.
		By solving the hierarchy of relaxations \reff{eq:d-mom} and \reff{eq:d-sos},
		we get a monotonically increasing
		sequence of lower bounds $\{\vartheta_d\}_{d=d_0}^{\infty}$
		for the minimum value $\vartheta_{\min}$, i.e., 
		\[
		\vartheta_{d_0} \le \vartheta_{d_0+1} \le \cdots \le \vartheta_{\min}.
		\]
		When $\idl(\Psi_i)_{2d}+ \qmod(\Phi_i)_{2d}$ is archimedean, we have
		$\vartheta_d \to \vartheta_{\min}$ as $d \to \infty$ \cite{Las01}.
		If $\vartheta_d = \vartheta_{\min}$ for some $d$,
		the relaxation (\ref{eq:d-mom}) is said to be exact
		for solving (\ref{eq:checking}). For such a case,
		the Moment-SOS hierarchy is said to have finite convergence.
		This is guaranteed when the archimedean and some optimality conditions hold (see~\cite{nie2014optimality}).
		Although there exist special polynomials such that
		the Moment-SOS hierarchy fails to have finite convergence,
		such special problems belong to a set of measure zero in the space of
		input polynomials \cite{nie2014optimality}.
		We refer to \cite{LasICM,laserre2015introduction,LauICM,nie2014optimality} for more work on polynomial and moment optimization.

		\section{The mixed volume of GNEPs}
		\label{sc:mv}
		For a polynomial system,
		if the set of its complex solutions is zero-dimensional,
		then the algebraic degree of the polynomial system
		counts the number of complex solutions for the system.
		In this section, we prove that under some genericity assumptions on the GNEP,
		the mixed volume of the complex KKT system (\ref{eq:KKTep}) equals its algebraic degree.
		Throughout this section, we have a GNEP of polynomials consisting of dense polynomials of certain degrees. 
		Without loss of generality, we assume $\mc{I}_i=\emptyset$ for all $i\in\{1,\dots, N\}$,
		i.e.,  all players only have equality constraints,
		for the convenience of our discussion.
		Note that if there exist inequality constraints,
		then all following results still hold by enumerating the active constraints.
		For a tuple $d:=(d_1\ddd d_N)$ of nonnegative integers,
		the $\cpx[x]_d$ represents the space of polynomials whose degree in $x_i$ is not greater than $d_i$.
		
		Recall that we say a system of polynomials is Bernstein generic if the number of isolated solutions in the complex torus equals its mixed volume.
		The main result of this section is the following:
		\begin{theorem}\label{thm:mixedVolumeOfGNEP}
			Consider the GNEP of polynomials given as in (\ref{eq:GNEP}).
			For each $i$, let $d_{i,0}\ddd d_{i,m_i}\in\N^N$ be tuples of nonnegative integers.
			Suppose all $f_i$ and $g_{i,j}$ are generic dense polynomials in $\cpx[x]_{d_{i,0}}$ and $\cpx[x]_{d_{i,j}}$ respectively.
			Then, the complex KKT system (\ref{eq:Fx}) is Bernstein generic.
		\end{theorem}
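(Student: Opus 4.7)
The plan is to apply the sharpness criterion of Bernstein's theorem (Theorem~\ref{thm:BernsteinThm}): $F$ is Bernstein generic if and only if, for every nonzero $w=(w_x,w_\lambda)\in\mathbb{Z}^n\times\mathbb{Z}^m$, the facial system $F^w$ has no root in $(\cpx\setminus\{\mbf{0}\})^{n+m}$. Only finitely many distinct facial systems arise as $w$ varies (they are indexed by faces of the Minkowski sum of the Newton polytopes of the $n+m$ components of $F$), so one may treat a single $w$ at a time. Denote by $B_w$ the locus in coefficient space consisting of tuples of $(f_i,g_{i,j})$ for which $F^w$ has a torus root. Each $B_w$ is Zariski closed, so the assertion reduces to proving that each $B_w$ is a proper subset; the finite union is then still proper, and any generic coefficient choice lies outside it.

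To analyze $F^w$, write $L_{i,\ell}:=\partial_{x_{i,\ell}}f_i-\sum_j\lambda_{i,j}\partial_{x_{i,\ell}}g_{i,j}$. Its monomials are of two types: $x^a$ coming from $\partial f_i$, with $h_w=\langle w_x,a\rangle$; and $\lambda_{i,j}x^b$ coming from $\lambda_{i,j}\partial g_{i,j}$, with $h_w=\langle w_x,b\rangle+w_{i,j}$. The constraint $g_{i,j}$ depends only on $x$, so its facial reduction depends only on $w_x$. Two regimes arise. (i) If $w_x\ne 0$, each $(g_{i,j})^{w_x}$ is supported on a proper face of the Newton polytope of $g_{i,j}$ and is a generic dense polynomial on that face; one then argues that, for generic coefficients, the restricted subsystem together with the facial Lagrangian (affine linear in $\lambda$) has no common solution in the torus. (ii) If $w_x=0$ and $w_\lambda\ne 0$, then $(g_{i,j})^w=g_{i,j}$, but $L_{i,\ell}^w$ retains only those $\lambda_{i,j}\partial g_{i,j}$ terms attaining $\min_j w_{i,j}$ (together with the $\partial f_i$ piece precisely when this minimum equals $0$); this is a strictly reduced linear system in $\lambda$ over the full constraint variety, and genericity of the Jacobian $[\partial_{x_i}g_{i,j}]_j$ on that variety will force no torus solution to exist.

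The main obstacle is that the combinatorial structure of the surviving monomials in $F^w$ depends delicately on the signs and magnitudes of the entries of $w$, so a direct case-by-case verification becomes unwieldy. I plan to handle this uniformly via a dimension count on the incidence variety $\{(c,x,\lambda):F^w(x,\lambda)=0\}\subset(\text{coefficients})\times(\cpx\setminus\{\mbf{0}\})^{n+m}$. Because each component of $F^w$ is linear in the coefficients of $(f_i,g_{i,j})$, for a generic point $(x,\lambda)$ of the torus the fiber in coefficient space has codimension equal to the number of linearly independent component-functionals of these coefficients. Showing that this codimension equals $n+m$—by exhibiting, for every component of $F^w$, a coefficient of some $f_i$ or $g_{i,j}$ that it uses but no other component uses—forces the projection to coefficient space to be non-dominant, whence $B_w$ is proper and the conclusion follows by Bernstein's criterion.
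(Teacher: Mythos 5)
Your overall strategy (Bernstein's criterion plus a genericity argument in coefficient space) is legitimate, but the mechanism you propose to make it uniform has two genuine gaps. First, the dimension count as stated does not conclude. Your incidence variety lives over the $(n+m)$-dimensional torus, and even in the best case the fiber in coefficient space has codimension exactly $n+m$ (it can never exceed the number of equations), so the incidence variety has dimension equal to that of the coefficient space, and the projection could perfectly well be dominant with finite fibers; ``codimension $=n+m$'' does \emph{not} force non-dominance. The missing ingredient is that $F^w$ is $w$-quasihomogeneous: if $(x,\lambda)$ is a torus root of $F^w$, so is $(t^{w_x}x,t^{w_\lambda}\lambda)$ for all $t\in\cpx\setminus\{0\}$, so roots come in positive-dimensional families, the fibers of the projection to coefficients are at least one-dimensional, and only then does the image drop dimension. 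Without this (standard but essential) observation your final inference is an off-by-one error.

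Second, the key combinatorial claim --- that every component of $F^w$ uses a coefficient no other component uses --- is unsubstantiated and fails for many $w$. The $n_i$ Lagrangian components of player $i$ are built from derivatives of the \emph{same} polynomials $f_i,g_{i,j}$, and the complementarity/constraint equations reuse the coefficients of the same $g_{i,j}$; when $w$ exposes a low-dimensional face, the surviving monomials (and hence coefficients) can be shared across all of these components, and the linear functionals can even become proportional for every torus point, so the fiber codimension drops below $n+m$. Ruling this out for dense supports requires analyzing exactly how the faces depend on the signs and magnitudes of the entries of $w$ --- the ``unwieldy case-by-case verification'' you set out to avoid, and precisely where the paper's proof does its work: it splits into the three cases \ref{casea}, \ref{caseb}, \ref{casec} according to where $h^\star(A_{\partial_{i,k}})$ is attained, controls the resulting facial systems geometrically via the varieties $U_i^w$, $W_i^w$, $(W_i^w)^\circ$, $\widehat{W}_1^w$ with Bertini-type dimension counts imported from \cite{Nie2022degree}, and disposes of the negative-weight subcase with the generalized Euler identity of Lemma~\ref{lm:geneuler}. (A minor further point: $B_w$ is only constructible, not Zariski closed, since it is the image of a projection from a non-proper torus; one must pass to closures.) As written, your proposal defers the entire substance of the theorem to a uniform criterion that does not hold uniformly.
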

		We first introduce some basic notation and useful lemmas before showing Theorem \ref{thm:mixedVolumeOfGNEP}.
		Let $w_1,\dots, w_N$ be weight vectors such that
		$$w_i=(\mathbf{0},\dots,\mathbf{0},(w_{i,1},\dots, w_{i,n_i},v_{i,1},\dots,v_{i,m_i}),\mathbf{0},\dots,\mathbf{0}),$$
		where $w_{i,k}$ and $v_{i,j}$ are weights for variables $x_{i,k}$ and $\lambda_{i,j}$ respectively. 
		Define $w:=\sum_{i=1}^N w_i$.
		Each $w_i$ is the weight vector for the system $F$ applied only for $x_i$ and $\lambda_{i}$ variables.

		The idea for the proof of the result is inspired by the paper \cite{breiding2020euclidean}. 
		We introduce the lemmas established from the paper that will be used for the proof of the Theorem \ref{thm:mixedVolumeOfGNEP}.

		\begin{lemma}\cite[Lemma 8]{breiding2020euclidean}
			Let $p$ be a polynomial in $\mathbb{C}[x]$ and $w\in \mathbb{Z}^n$ be an integer vector. If $\frac{\partial p^w}{\partial x_i}\ne 0$, then $\frac{\partial p^w}{\partial x_i}=\left(\frac{\partial p}{\partial x_i}\right)^w$ and $h^\star(A_{\frac{\partial p}{\partial x_i}})=h^\star(A_p)-w_i$.
		\end{lemma}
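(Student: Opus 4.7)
The plan is to prove both assertions by a direct bookkeeping on supports, straight from the definitions of $p^w$, $A_p^w$, and $h_w^\star$. Write $p = \sum_{a \in A_p} c_a\, x^a$, so that by definition
\[
p^w \;=\; \sum_{a \in A_p^w} c_a\, x^a, \qquad A_p^w \;=\; \{a \in A_p : \langle w, a\rangle = h_w^\star(A_p)\}.
\]

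First I would differentiate $p^w$ term-by-term to obtain
\[
\frac{\partial p^w}{\partial x_i} \;=\; \sum_{a \in A_p^w,\, a_i \ge 1} c_a\, a_i\, x^{a - e_i},
\]
where $e_i$ denotes the $i$-th standard basis vector. No cancellations occur on the right, since the shift $a \mapsto a - e_i$ is injective on $\{a \in A_p : a_i \ge 1\}$ and each coefficient $c_a a_i$ is nonzero. The hypothesis $\frac{\partial p^w}{\partial x_i} \ne 0$ therefore says precisely that the set $B := \{a \in A_p^w : a_i \ge 1\}$ is nonempty.

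Next I would analyze $\left(\frac{\partial p}{\partial x_i}\right)^w$. The same injectivity argument applied to $\frac{\partial p}{\partial x_i} = \sum_{a \in A_p,\, a_i \ge 1} c_a a_i x^{a-e_i}$ shows that its support is exactly $A_{\partial p/\partial x_i} = \{a - e_i : a \in A_p,\ a_i \ge 1\}$. Since $h_w(a - e_i) = \langle w, a\rangle - w_i$ and $B \ne \emptyset$, the minimum of $h_w$ on $A_{\partial p/\partial x_i}$ is attained precisely at those shifted exponents $a - e_i$ with $a \in B$, which gives
\[
h_w^\star\!\left(A_{\partial p/\partial x_i}\right) \;=\; h_w^\star(A_p) - w_i.
\]
Consequently $\left(\frac{\partial p}{\partial x_i}\right)^w = \sum_{a \in B} c_a a_i x^{a-e_i}$, and this matches the expression for $\frac{\partial p^w}{\partial x_i}$ obtained in the first step.

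I do not anticipate any real difficulty here; the only point that requires a moment of care is ruling out accidental cancellations when identifying $A_{\partial p/\partial x_i}$ with the shifted support, and this is handled once and for all by noting the injectivity of $a \mapsto a - e_i$ on $\{a \in A_p : a_i \ge 1\}$. Everything else is a direct comparison of two finite sums of monomials.
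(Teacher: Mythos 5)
Your proof is correct and complete: the term-by-term differentiation, the observation that the shift $a\mapsto a-e_i$ is injective on $\{a\in A_p : a_i\ge 1\}$ (so no cancellation can occur in either derivative), and the use of the hypothesis $\frac{\partial p^w}{\partial x_i}\ne 0$ precisely to guarantee that the minimum of $h_w$ on the shifted support $A_{\partial p/\partial x_i}$ is attained by exponents coming from the face, are exactly the points that need checking. Note that the paper itself gives no proof of this lemma — it is quoted from \cite[Lemma 8]{breiding2020euclidean} — and your direct support-bookkeeping argument is the standard one, essentially that of the cited source, so there is nothing further to compare.
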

		
		\begin{lemma}\label{lm:geneuler}
			Let $p$ be a polynomial in $\mathbb{C}[x]_d$ and $w=\sum_{i=1}^N w_i$ be a weight vector in $\mathbb{Z}^n$.
			Then, we have
			\[h_{w_i}^\star(A_p)\cdot p^w=\sum\limits_{k=1}^{n_i}w_{i,k}x_{i,k}\frac{\partial p^w}{\partial x_{i,k}}.\]
		\end{lemma}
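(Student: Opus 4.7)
The strategy is to exploit the product structure of the support of a dense polynomial and apply a termwise weighted Euler identity. Since $p \in \cpx[x]_d$ is assumed to be a (generic) dense polynomial with prescribed block-degree bounds, its support factors as a Cartesian product $A_p = A_{p,1}\times\cdots\times A_{p,N}$ with $A_{p,i}\subset \N^{n_i}$. Moreover, because the weight vector $w_i$ has nonzero entries only in its $i$-th block, the pairing $\langle w_i, a\rangle$ depends only on the $i$-th block $a_i$ of the exponent vector $a=(a_1\ddd a_N)$.

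The key step is then to show that $h_w^\star(A_p)=\sum_{i=1}^N h_{w_i}^\star(A_p)$. Indeed, minimizing $\langle w,a\rangle = \sum_i \langle w_i, a\rangle$ over the product set $A_p = \prod_i A_{p,i}$ separates into the sum of block-wise minima $\sum_i \min_{a_i\in A_{p,i}}\langle w_i,a_i\rangle$, since the blocks vary independently, and this equals $\sum_i h_{w_i}^\star(A_p)$. Consequently, for any $a\in A^w$ the equality $\sum_i \langle w_i, a\rangle = \sum_i h_{w_i}^\star(A_p)$ together with the pointwise lower bounds $\langle w_i, a\rangle\ge h_{w_i}^\star(A_p)$ forces termwise equality $\langle w_i, a\rangle = h_{w_i}^\star(A_p)$ for every $i$. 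That is, every monomial of $p^w$ achieves the block-wise minimum.

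Given this, for each term $c_a x^a$ appearing in $p^w$ a direct computation shows
\[
\sum_{k=1}^{n_i} w_{i,k}\,x_{i,k}\,\frac{\partial (c_a x^a)}{\partial x_{i,k}}
\,=\, c_a \Big(\sum_{k=1}^{n_i} w_{i,k}\,a_{i,k}\Big) x^a
\,=\, c_a \langle w_i, a\rangle\, x^a
\,=\, h_{w_i}^\star(A_p)\cdot c_a x^a,
\]
and summing over $a\in A^w$ yields the identity $\sum_{k} w_{i,k}x_{i,k}\,\partial p^w/\partial x_{i,k} = h_{w_i}^\star(A_p)\cdot p^w$.

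The main (and essentially only) obstacle is the second step: establishing that $\langle w_i,a\rangle$ is constant on the entire exposed face $A^w$ and equal to $h_{w_i}^\star(A_p)$. This is where the density/product-structure hypothesis enters, and without it termwise equality can fail even for very simple polynomials. Beyond this combinatorial observation, the remainder is a monomial-by-monomial application of the standard weighted Euler relation.
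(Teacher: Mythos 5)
Your proof is correct and follows essentially the same route as the paper: a monomial-by-monomial weighted Euler identity $\sum_{k} w_{i,k}x_{i,k}\,\partial x^a/\partial x_{i,k} = \langle w_i,a\rangle x^a$, summed over the support of $p^w$. You are in fact more careful than the paper at the one delicate point: the paper's proof silently treats $\langle w_i,\cdot\rangle$ as constant on $A^w$ when it sums over the monomials of $p^w$ (and when it invokes $h_{w_i}^\star(A_p)=h_{w_i}^\star(A_{p^w})$), whereas you justify exactly this constancy via the Cartesian product structure $A_p=A_{p,1}\times\cdots\times A_{p,N}$ of the dense support together with the blockwise lower bounds $\langle w_i,a\rangle \ge h_{w_i}^\star(A_p)$ --- which is indeed the hypothesis without which the identity can fail.
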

		\begin{proof}
			For a monomial $x^a$, note that 
			$x_{i,k}\frac{\partial x^a}{\partial x_{i,k}}=a_{i,k}x^a$. Therefore, we have
			\[\sum_{k=1}^{n_i}w_{i,k}x_{i,k}\frac{\partial x^a}{\partial x_{i,k}}=\sum_{k=1}^{n_i}w_{i,k}a_{i,k}x^a=\langle w_i,a\rangle x^a.
			\]
			Summing over all monomials in $p^w$, we get
			\[h_{w_i}^\star(A_{p^w})\cdot p^w=\sum\limits_{k=1}^{n_i}w_{i,k}x_{i,k}\frac{\partial p^w}{\partial x_{i,k}}.\]
			Noting the fact that $h_{w_i}^\star(A_p)=h_{w_i}^\star(A_{p^w})$, we get the desired result.
		\end{proof}
		Note that Lemma~\ref{lm:geneuler} is a generalization of Euler's formula for quasihomogeneous polynomials mentioned in \cite[Lemma 9]{breiding2020euclidean}. 
		We are now ready to prove Theorem \ref{thm:mixedVolumeOfGNEP}.
		
		\begin{proof}[Proof of Theorem~\ref{thm:mixedVolumeOfGNEP}]
			Recall the second part of Theorem \ref{thm:BernsteinThm} that a polynomial system is Bernstein generic if and only if the facial system has no root in the complex torus for any nonzero vector $w$.
			For each polynomial $p$ and a weight vector $w$, let $\mathbf{I}^w_i(p)$ and $\mathbf{J}^w_i(p)$ be partition sets of indices $\{1,\dots,n_i\}$ for each $i$,
			such that $\frac{\partial p^w}{\partial x_{i,j}}\ne0$ if $j\in \mathbf{I}^w_i(p)$ and $\frac{\partial p^w}{\partial x_{i,j}}=0$ if $j\in \mathbf{J}^w_i(p)$. 
			That is, $\mathbf{I}^w_i(p)$ is the set of all labels $j$ such that the variable $x_{i,j}$ appears in $p^w$, and $\mathbf{J}^w_i(p)=\{1 \ddd n_i\}\setminus \mathbf{I}^w_i(p)$.
			Also, we let 
			\[\mathbf{I}_i^w=\mathbf{I}_i^w(f_i)\cup\left(\bigcup^{m_i}_{j=1}\mathbf{I}_i^w(g_{i,j})\right),\quad \mathbf{I}^w=\bigcup_{i=1}^N \mathbf{I}_i^w \mbox{,\quad and\quad}\hat{n}_i=\left|\mathbf{I}_i^w\right|.\]
			Furthermore, let $\hat{n}:=\sum_{i=1}^N \hat{n}_i$.
			It is clear that if $j\in \mathbf{J}^w_i(p)$ and $a\in A^w_p$, then $a_{i,j}=0$. 
			Hence, 
			we may consider $p^w$ as a polynomial in $\mathbb{C}[\mathbf{I}^w]:=\mathbb{C}[x_{i,j}\mid j \in \mathbf{I}^w_i(p),i = 1,\dots, N]$. 
			Note that if $p$ is a generic polynomial, then $p^w$ can also be considered as a generic polynomial for a given support.

			In the following, for a fixed weight vector $w$, we show that there are no roots for the facial system $F^w$ in the torus $(\mathbb{C}\setminus\{\boldsymbol{0}\})^n$. 
			Consider the facial system of $F_i$, say,
			$$F_i^w=\left\{(\nabla_{x_i}f_i-\sum_{j=1}^{m_i}\lambda_{i,j}\nabla_{x_i}g_{i,j})^w,\, g_{i,1}^w,\, \dots,\, g_{i,m_i}^w\right\},$$ 
			where \[\nabla_{x_i}f_i-\sum_{j=1}^{m_i}\lambda_{i,j}\nabla_{x_i}g_{i,j}=\left\{\frac{\partial f_i}{\partial x_{i,1}}-\sum_{j=1}^{m_i}\lambda_{i,j}\frac{\partial g_{i,j}}{\partial x_{i,1}},\dots,\frac{\partial f_i}{\partial x_{i,n_i}}-\sum_{j=1}^{m_i}\lambda_{i,j}\frac{\partial g_{i,j}}{\partial x_{i,n_i}}\right\}.\]
			For each $k\in\{1,\dots,n_i\}$,
			let $A_{\partial_{i,k}}$ be the support of $\frac{\partial f_i}{\partial x_{i,k}}-\sum_{j=1}^{m_i}\lambda_{i,j}\frac{\partial g_{i,j}}{\partial x_{i,k}}$.
			Then, we have 
			\[h^\star(A_{\partial_{i,k}})=\min\left\{h^\star\left(A_{\frac{\partial f_i}{\partial x_{i,k}}}\right),\min\limits_{j=1,\dots,m_i}\left\{h^\star\left(A_{\frac{\partial g_{i,j}}{\partial x_{i,k}}}\right)+v_{i,j}\right\}\right\}.\]
			In the above, we recall that for a polynomial $p$, the $h^\star(A_{p})$ is the minimum of $h(a):=\lip w,a \rip$ over $a\in A_p$; see Section~\ref{subsection:mixedVolume} for more details.
			Depending on where the value of $h^\star(A_{\partial_{i,k}})$ is attained, there are the following three cases:
			\begin{enumerate}[label=\textbf{(\alph*)}]
				\item  \label{casea}
				$h^\star(A_{\partial_{i,k}})=\min\limits_{j=1,\dots,m_i}\left\{h^\star\left(A_{\frac{\partial g_{i,j}}{\partial x_{i,k}}}\right)+v_{i,j}\right\}<h^\star\left(A_{\frac{\partial f_i}{\partial x_{i,k}}}\right)$,
				\item \label{caseb}
				$h^\star(A_{\partial_{i,k}})=h^\star\left(A_{\frac{\partial f_i}{\partial x_{i,k}}}\right)<\min\limits_{j=1,\dots,m_i}\left\{h^\star\left(A_{\frac{\partial g_{i,j}}{\partial x_{i,k}}}\right)+v_{i,j}\right\}$,
				\item \label{casec} $h^\star(A_{\partial_{i,k}})=h^\star\left(A_{\frac{\partial f_i}{\partial x_{i,k}}}\right)=\min\limits_{j=1,\dots,m_i}\left\{h^\star\left(A_{\frac{\partial g_{i,j}}{\partial x_{i,k}}}\right)+v_{i,j}\right\}$.
			\end{enumerate}
			Let $M_i^w\subseteq\{1,\dots, m_i\}$ be the set of indices $l$ such that 
			\[h^\star\left(A_{\frac{\partial g_{i,l}}{\partial x_{i,k}}}\right)+v_{i,l} = \min\limits_{j=1,\dots,m_i}\left\{h^\star\left(A_{\frac{\partial g_{i,j}}{\partial x_{i,k}}}\right)+v_{i,j}\right\}.\]
			Then, for each $k=1,\dots,n_i$, we have
			\[\left(\frac{\partial f_i}{\partial x_{i,k}}-\sum\limits_{i=1}^{m_i}\lambda_{i,j}\frac{\partial g_{i,j}}{\partial x_{i,k}}\right)^w=\left\{\begin{array}{ll}
				-\sum\limits_{j\in M_i^w}\lambda_{i,j}\frac{\partial g^w_{i,j}}{\partial x_{i,k}}, & \textbf{Case  }\ref{casea}\\
				&\\
				\frac{\partial f_i^w}{\partial x_{i,k}}, & \textbf{Case }\ref{caseb}\\
				&\\
				\frac{\partial {f_i}^w}{\partial x_{i,k}}-\sum\limits_{j\in M_i^w}\lambda_{i,j}\frac{\partial g^w_{i,j}}{\partial x_{i,k}}, & \textbf{Case }\ref{casec}.
			\end{array}\right.\] 
			Note that for a fixed $i$,
			if we consider a generic dense polynomial $p\in \mathbb{C}[x]$ with a fixed multidegree,
			then we have the same support $A_{\frac{\partial p}{\partial x_{i,k}}}$ for $\frac{\partial p}{\partial x_{i,k}}$ for any $k =1,\dots, n_i$.
			Therefore, the values of $h^\star\left(A_{\frac{\partial f_{i}}{\partial x_{i,k}}}\right)$ and $\min\limits_{j=1,\dots,m_i}\left\{h^\star\left(A_{\frac{\partial g_{i,j}}{\partial x_{i,k}}}\right)+v_{i,j}\right\}$ do not depend on the choice of $k=1,\dots, n_i$.
			It means that without loss of generality,
			if we have $h^\star\left(A_{\frac{\partial f_i}{\partial x_{i,k}}}\right)>\min\limits_{j=1,\dots,m_i}\left\{h^\star\left(A_{\frac{\partial g_{i,j}}{\partial x_{i,k}}}\right)+v_{i,j}\right\}$ for some $k\in \{1,\dots,n_i\}$, then so do all other indices in $\{1,\dots, n_i\}$. 
			Furthermore, since we only concern zeros of $F^w$ in the torus,
			we assume that not all polynomials in $F^w$ are divisible by any single variable $x_{i,j}$;
			otherwise, we may divide all polynomials in $F^w$ by a proper power of $x_{i,j}$ until one of them cannot be divided by $x_{i,j}$ any further.

			For each index $i$, 
			let 
			\[U_i^w :=\mathbf{V}(g_{i,1}^w,\dots,g_{i,m_i}^w)\subset \mathbb{C}^{\hat{n}},\]
			\[\text{Jac}_i^w:=\begin{bmatrix}
				\nabla_{x_i}f_i^w(x) & \nabla_{x_i}g_{i,1}^w (x) & \cdots & \nabla_{x_i}g_{i,m_i}^w (x)
			\end{bmatrix},\ \mbox{and}\]
			$$W_i^w:=\left\{x\in \mathbb{C}^{\hat{n}} \mid \text{rank}  (\text{Jac}_i^w)
			\leq m_i\right\}.$$
			Note that the variety $U^w_i$ is defined by the $i$th player's feasibility constraints,
			$\text{Jac}^w_i$ is the transpose of the Jacobian matrix in the variable $x_i$ for the vector $[f^w_i,\ g^w_{i,1},\ \dots,\ g^w_{i,m_i}]^{\top}$,
			and $W^w_i$ is the determinantal variety given by the facial system of complex Fritz-John conditions (see \cite[Section 3]{Nie2022degree}). 
			Recall the assumption that not all polynomials involved are divisible by any single variable.
			For all $2\le l\le m_i$, the hypersurface given by $g^w_{i,l}(x)=0$ intersects the common zeros of $g^w_{i,1}\ddd g^w_{i,l-1}$ without any fixed point when varying the coefficients of $g^w_{i,l}$.
			Thus, by Bertini's theorem (see \cite[Proposition~2.2]{Nie2022degree} for example), the 
			variety $U_i^w$ is of codimension $m_i$ (or possibly empty).
			Then, by a similar argument,
			$\dim (U_i^w\cap W_i^w)\leq\hat{n}-\hat{n}_i$. Also, following the argument in the proof of \cite[Theorem 3.1]{Nie2022degree}, the dimension for $V_{-i}^w:=\bigcap_{k\ne i}(U_k^w\cap W_k^w)$ is not greater than $\hat{n}_i$.

			If $x^\star\in W_i^w$, 
			then there exist $\lambda_{i,0},\dots,\lambda_{i,m_i}\in \mathbb{C}$ such that
			\[\lambda_{i,0}\nabla_{x_i}f_i^w (x^\star) +\lambda_{i,1}\nabla_{x_i}g_{i,1}^w (x^\star) +\cdots+\lambda_{i,m_i}\nabla_{x_i}g_{i,m_i}^w (x^\star) =0.\]
			It means that if $\left(\frac{\partial f_i}{\partial x_{i,k}}-\sum_{j=1}^{m_i}\lambda_{i,j} \frac{\partial g_{i,j}}{\partial x_{i,k}}\right)^w (x^\star) =0$, then $x^\star \in W_i^w$.
			Indeed, all nonzero solutions to the facial system, if they exist, must be in $U_i^w\cap W_i^w$ for all $i=1\ddd N$.
			If $m_i> \hat{n}_i$ for some $i$, then ${ U_i^w\cap}{V}^w_{-i}=\emptyset$ when $g_{i,j}$ are generic polynomials, so $F^w$ does not have any solution. Hence, we may assume that $m_i\leq \hat{n}_i$.
			From now on, we prove the desired statement by considering each of the three cases mentioned above respectively.
			
			\textbf{Case} \ref{casea}. 
			Suppose that there exists $i=1\ddd N$ such that
			\[\left(\frac{\partial f_i}{\partial x_{i,k}}-\sum\limits_{i=1}^{m_i}\lambda_{i,j}\frac{\partial g_{i,j}}{\partial x_{i,k}}\right)^w=-\sum\limits_{j\in M_i^w}\lambda_{i,j}\frac{\partial g^w_{i,j}}{\partial x_{i,k}}.\]
			Without loss of generality, we assume $i=1$.
			Note that if there is a root $(x^\star,\lambda^\star)$ over $(\mathbb{C}\setminus \{\boldsymbol{0}\})$ of $F^w$, then
			\be\label{eq:case1}\sum_{j\in M_1^w}\lambda_{1,j}^\star \frac{\partial g_{1,j}^w}{\partial x_{1,k}}(x^\star)=0.\ee
			Denote by $(\text{Jac}_i^w)^{\circ}$ the submatrix of the rightmost $m_i$ columns of $\text{Jac}_i^w$, and
			$$(W_i^w)^{\circ}:=\{x\in \mathbb{C}^{\hat{n}}\mid \text{rank}  (\text{Jac}_i^w)^{\circ} \leq m_i-1\}.$$ 
			Then the equation (\ref{eq:case1}) implies that $x^\star\in(W_1^w)^{\circ}\cap U_1^w$.
			For a generic $z_{-1}\in\cpx^{\hat{n}-\hat{n}_1}$, \cite[Proposition~2.2]{Nie2022degree} implies that
			the variety $\{x_1\in\cpx^{\hat{n}_1}\mid g_{1,1}^w(x_1,z_{-1})=\dots=g_{1,m_1}^w(x_1,z_{-1})\}$ is smooth, i.e., the matrix $(\text{Jac}_i^w)^{\circ}$ has full column rank at $(x_1,z_{-1})$ for all $x_1\in\cpx^{\hat{n}_1}$.
			So we know $\dim((W_1^w)^{\circ}\cap U_1^w)<\hat{n}-\hat{n}_1$ by 
			\cite[Theorem~1.25]{shafarevichBook}.
			Thus, we have $(W_1^w)^{\circ}\cap U_1^w\cap V^w_{-1}=\emptyset$, which contradicts to the fact that $(x^\star,\lambda^\star)$ is a root of $F^w$.

			\textbf{Case} \ref{caseb}. 
			Suppose that there exists $i=1\ddd N$ such that
			\be\label{eq:caseb}
			\left(\frac{\partial f_i}{\partial x_{i,k}}-\sum\limits_{i=1}^{m_i}\lambda_{i,j}\frac{\partial g_{i,j}}{\partial x_{i,k}}\right)^w=\frac{\partial f^w_{i}}{\partial x_{i,k}}.\ee Without loss of generality, assume that $i=1$.
			We further assume that $m_1\ne 0$ because if $m_1=0$, then it can be considered as a special case of Case \ref{casec}. For a root $(x^\star,\lambda^\star)$ for the facial system,
			we have $\frac{\partial f_1^w}{\partial x_{1,k}}(x^\star)=0$ for all $k=1,\dots,n_1$. 
			Then, $\mathbf{V}(\frac{\partial f_1^w}{\partial x_{1,k}}\mid k=1,\dots, n_1)\cap V^w_{-1}$ has the dimension at most zero due to the genericity of $f_1^w$. 
			Hence, the genericity of $g_{1,j}^w$ concludes that there is no point in $\bigcap_{i=1}^N(W_i^w\cap U_i^w)$ satisfying (\ref{eq:caseb}).
			
			\textbf{Case} \ref{casec}. As the first two cases cannot happen, we may assume that
			\[h^\star\left(A_{\frac{\partial f_i}{\partial x_{i,k}}}\right)=\min\limits_{j=1,\dots,m_i}\left\{h^\star\left(A_{\frac{\partial g_{i,j}}{\partial x_{i,k}}}\right)+v_{i,j}\right\}\]
			for all $i=1,\dots, N$.
			We consider two subcases,
			the case that $w_{i,k}\geq 0$ for each indices $i=1,\dots,N$ and $k\in \mathbf{I}_i^w$,
			and the case that there is $i\in {1,\dots,N}$ such that $w_{i,k}<0$ for some $k\in \mathbf{I}_i^w$. 
			
			First, we assume that $w_{i,k}\geq 0$ for each index $i$ and $k\in \mathbf{I}_i^w$. 
			Because we consider a dense polynomial $f_i$, its partial derivatives are also dense polynomials.
			Thus, we have $\boldsymbol{0}$ in the support $A_{p}$ for each $p\in \{\frac{\partial f_i}{\partial x_{i,k}}\mid{k\in \mathbf{I}_i^w}\}$.
			Therefore, we have $0\geq h^\star(A_{p})$.
			Since all $w_{i,k}\geq 0$, we also have $h^\star(A_{p})\geq 0$, and so we get $h^\star(A_p)=0$ for each $p$. 
			It further concludes that $w_{i,k}=0$ for all $i$ and $k\in \mathbf{I}_i^w$.
			Also, since \[h^\star\left(A_{\frac{\partial f_i}{\partial x_{i,k}}}\right)=\min\limits_{j=1,\dots,m_1}\left\{h^\star\left(A_{\frac{\partial g_{i,j}}{\partial x_{i,k}}}\right)+v_{i,j}\right\}=0\]
			for each $i$,
			we have $\min\limits_{j=1,\dots,m_i} v_{i,j}=0$. 
			We assume that 
			there is at least one index $i\in {1,\dots,N}$ such that 
			$v_{i,j}>0$ for some $j\in \{1,\dots, m_i\}$.
			Otherwise, $w$ can be considered as just a zero vector and there is nothing to prove.  
			Recall that $M_i^w$ is a subset of $\{1,\dots, m_i\}$ such that $v_l=\min\limits_{j=1,\dots, m_i} v_{i,j}=0$ for all $l\in M_i^w$.
			Then, we know that $M_i^w\subsetneq \{1,\dots, m_1\}$ for some $i=1,\dots,N$; otherwise, $F^w$ equals to $F$.
			Without loss of generality, let $i=1$ be such an index. 
			Then, the size of $M_1^w$ is exactly the number of variables $\lambda_{1,j}$ that appear in $F_1^w$ (i.e., $\lambda_{1,j}$ variable appears in $F_1^w$ if and only if $j\in M_1^w$).
			Without loss of generality, we further assume $M_1^w=\{1\ddd \hat{m}_1\}$ for some $\hat{m}_1<m_1$,
			and let $\widehat{\text{Jac}_1^w}$ be the submatrix of the left most $\hat{m}_1+1$ columns of $\text{Jac}_1^w$.
			If $(x^{\star},\lmd^{\star})$ is a nonzero solution to the facial system,
			then $\rank\,(\widehat{\text{Jac}_1^w}(x^{\star}))\le\hat{m}_1$.
			Define
			\[\widehat{W}_1^w=\{x\in \mathbb{C}^{\hat{n}}\mid\text{rank}(\widehat{\text{Jac}_1^w})\leq \hat{m}_1\}\]
			the determinantal variety of $\widehat{\text{Jac}_1^w}$.
			Then, by the similar argument from the proof for \cite[Theorem 3.1]{Nie2022degree}, we have 
			\[\cod\,(\widehat{W}_1^w\cap\mathbf{V}(g^w_{1,1}\ddd g^w_{1,\hat{m}_1}))\geq\hat{n}_1.\]
			Note that $g^w_{1,j}(x^{\star})=0$ for all $j=1\ddd m_i$. Since $\hat{m}_1<m_1$,
			for any $l$ such that $\hat{m}_1< l\le m_1$,
			the hypersurface $g^w_{1,l}$ intersects $\widehat{W}_1^w\cap\mathbf{V}(g^w_{1,1}\ddd g^w_{1,\hat{m}_1})$ properly from the genericity of $g^w_{1,l}$.
			It means that we have $\cod(\widehat{W}_1^w\cap U_1^w)>\hat{n}_1$, and hence $\widehat{W}_1^w\cap U_1^w\cap V_{-1}^w=\emptyset$. Therefore,
			such $x^{\star}$ does not exist.
			
			Lastly, we deal with the subcase that there exists $i\in \{1,\dots,N\}$ such that $w_{i,k}<0$ for some $k\in \mathbf{I}_i^w$. 
			Without loss of generality,
			assume that $i=1$ and suppose that $w_{1,\hat{k}}<0$ for some $\hat{k}\in \mathbf{I}_1^w$. 
			Since there is a negative entry $w_{1,\hat{k}}$, we have $h_1^\star(A_{g_{1,t}})<0$ for some $t\in \{1,\dots, m_1\}$.
			Furthermore, suppose that we have a root $(x^\star,\lambda^\star)$ of $F^w$. 
			Note that $g_{1,j}^w(x^\star)=0$ for all $j=1,\dots, m_i$.
			Let $t\in M_1^w$ be the index such that $h^\star(A_{g_{1,t}})<0$. 
			Then, by Lemma \ref{lm:geneuler}, we have
			\begin{align*}0=&\ h^\star(A_{g_{1,t}}) \lambda_{1,t}g^w_{1,t}(x^\star) =\sum_{i=1}^N\sum\limits_{k\in \mathbf{I}_i^w}w_{i,k}x_{i,k} \lambda_{1,t}\frac{\partial g_{1,t}^w}{\partial x_{i,k}}(x^\star)
				\\=&\ \sum_{k\in \mathbf{I}_1^w}w_{1,k}x_{1,k}\left(\frac{\partial f^w_1}{\partial x_{1,k}} (x^\star)-\sum_{j\in M_1^w\setminus\{t\}}\lambda_{1,j}\frac{\partial g^w_{1,j}}{\partial x_{1,k}} (x^\star)\right)
				\\&\ + \sum_{i=2}^N\sum\limits_{k\in \mathbf{I}_i^w}w_{i,k}x_{i,k} \lambda_{1,t}\frac{\partial g_{1,t}^w}{\partial x_{i,k}}(x^\star)
				\\=&\ h_{w_1}^\star\left(A_{f_1}\right)f^w_1 (x^\star)-\sum_{j\in M_1^w\setminus\{t\}}\lambda_{1,j}h_{w_1}^\star\left(A_{g_{1,j}}\right)g^w_{1,j} (x^\star)
				\\&\ + \sum_{i=2}^Nh_{w_i}^\star\left(A_{g_{1,t}}\right) \lambda_{1,t}g_{1,t}^w(x^\star).
			\end{align*}
			In the above, the third equality holds due to the fact that \[\frac{\partial f^w_1}{\partial x_{1,k}} (x^\star)-\sum_{j\in M_1^w}\lambda_{1,j}\frac{\partial g^w_{1,j}}{\partial x_{1,k}} (x^\star)=0,\]
			and the last equality is obtained by applying Lemma \ref{lm:geneuler}. 
			Let $$q=h_{w_1}^\star\left(A_{f_1}\right)f^w_1 -\sum_{j\in M_1^w\setminus\{t\}}\lambda_{1,j}h_{w_1}^\star\left(A_{g_{1,j}}\right)g^w_{1,j}
			+ \sum_{i=2}^Nh_{w_i}^\star\left(A_{g_{1,t}}\right) \lambda_{1,t}g_{1,t}^w$$
			be the polynomial obtained from the last equality. 
			We know that a point $x^\star$ lies in $\mathbf{V}(q)$. 
			It means that $q(x^\star)=h_{w_1}^\star(A_{f_1})f_1^w(x^\star)=0$ 
			since $x^\star\in U_1^w$. However, it contradicts the genericity of $f_1$.
		\end{proof}
		
		\begin{remark}
			\begin{enumerate}
				\item For the GNEP, if the defining functions are generic polynomials, then the set of complex KKT tuples is finite,
				and all KKT tuples lie in the torus when the GNEP only has equality constraints.
				This is implied by \cite[Theorem~3.1]{Nie2022degree}.
				In this case, Bernstein genericity implies that the mixed volume agrees with the algebraic degree.
				The explicit formula for the algebraic degree of generic GNEPs is studied in the recent paper \cite{Nie2022degree}.

				\item {Even when defining functions for the GNEP are not generic,
					the mixed volume still is an upper bound for the number of isolated solutions in the torus
					by Theorem~\ref{thm:BernsteinThm}.} 
				In this case, we may still find all complex KKT tuples using the homotopy continuation.
				However, it is still open in general that how to justify the completeness of solutions of a system found by the homotopy continuation. For partial results on the test of checking completeness, see \cite{leykin2018trace,brysiewicz2022sparse}.
			\end{enumerate}
		\end{remark}

		\section{Numerical examples}
		\label{sc:ne}
		In this section, we present some numerical experiments of 
		solving GNEPs of polynomials using the polyhedral homotopy continuation.
		We apply the software {\tt HomotopyContinuation.jl} to find complex KKT points of GNEPs by the polyhedral homotopy continuation,
		and apply {\tt Gloptipoly3} and {\tt SeDuMi} to implement the Moment-SOS hierarchy of semidefinite relaxations for verifying GNEs.
		The computation is executed in a {Macbook pro, 2 GHz Quad-Core Intel Core i5, 32 GB RAM}.
		
		When the GNEP is convex,
		if the complex KKT tuple $(x,\lambda_1 \ddd \lambda_N)$ satisfies
		\[ g_{i,j}(x)\ge0, \ \lambda_{i,j}\ge0 \quad  \mbox{for all}\quad i\in\{1,\dots,N\},\ j\in\mc{I}_i, \]
		i.e., $x$ is a KKT point, then $x$ is a GNE.
		For nonconvex GNEPs,
		the tuple of strategies $x$ is a GNE if and only if the
		\[ \dt\,:=\,\min_{i=1,\dots,N}\left\{ \min_{j\in\mc{I}} \{ g_{i,j}(x)\} ,\  \min_{j\in\mc{E}}\{ -|g_{i,j}(x)|\},\  \delta_i\right\}\ge 0 \]
		where $\delta_i$ is given by (\ref{eq:checking}).
		The $\delta$ is called the \emph{accuracy parameter} for $x$.
		In practical computation, one may not get $\dt\ge 0$ exactly, due to rounding-off errors.
		In this section, we regard $x$ being a GNE if $\dt\ge-10^{-6}$.

		\begin{example}
			\label{ep:NEP}
			\rm
			(i) Consider the $2$-player NEP in \cite{Nie2020nash}
			\begin{equation*}
				\mbox{1st player:} \left\{
				\begin{array}{cl}
					\min\limits_{x_1 \in \re^3} & \sum_{j=1}^3 x_{1,j}(x_{1,j}- j\cdot x_{2,j})\\
					s.t. & 1-x_{1,1}x_{1,2} \ge 0, \,1-x_{1,2}x_{1,3}\ge 0,\, x_{1,1} \ge 0,
				\end{array}\right.
			\end{equation*}
			\begin{equation*}
				\mbox{2nd player:}  \left\{
				\begin{array}{cl}
					\min\limits_{x_2 \in \re^3} & %%x_{1,1}x_{1,2}x_{1,3}+ x_{2,1}x_{2,2}x_{2,3}+
					\prod_{j=1}^3 x_{2,j} +
					\sum\limits_{\mathclap{\substack{1\le i<j\le 3\\1\le k\le 3}}} x_{1,i}x_{1,j}x_{2,k}+
					\sum\limits_{\mathclap{\substack{1\le i\le 3\\1\le j<k\le 3}}} x_{1,i}x_{2,j}x_{2,k}
					\\
					s.t. & 1-(x_{2,1})^2 - (x_{2,2})^2 - (x_{2,3})^2 = 0.
				\end{array}\right.
			\end{equation*}
			This is a nonconvex NEP since both players' optimization problems are nonconvex.
			Moreover, the feasible set for the first player's optimization problem is unbounded.
			By implementing the polyhedral homotopy continuation on the complex KKT system,
			we got $252$ complex KKT tuples,
			and $8$ of them satisfy the KKT system (\ref{eq:KKTsystem}).
			Since this is a nonconvex problem,
			we ran Algorithm~\ref{ag:selectGNE} for selecting NEs.
			We obtained four NEs $u=(u_1,u_2)$ with
			\[
			\begin{array}{ll}
				u_1=(0.3198,  0.6396, -0.6396 ), & u_2=( 0.6396,    0.6396,    -0.4264); \\
				u_1=(0.0000, 0.3895, 0.5842 ),   & u_2=(-0.8346, 0.3895, 0.3895); \\
				u_1=(0.2934, -0.5578, 0.8803 ),  & u_2=( 0.5869, -0.5578,  0.5869);\\
				u_1=(0.0000, -0.5774, -0.8660 ), & u_2=( -0.5774, -0.5774, -0.5774). \\
			\end{array}
			\]
			Their accuracy parameters are respectively
			\[
			-5.2324\cdot 10^{-11}, \, -1.7619\cdot 10^{-9}, \, -4.8633\cdot 10^{-9},
			-7.1933\cdot 10^{-9}.
			\]
			Note that for this NEP,
			the mixed volume of the complex KKT system equals $252$.
			The polyhedral homotopy found all complex KKT tuples,
			so all NEs are obtained by our method.
			It took about $7.81$ seconds to find all NEs, including $4$ seconds to find all complex KKT tuples, and about $3.81$ seconds to verify NEs.
			
			(ii) If the second player's objective becomes
			\[
			-\prod_{j=1}^3 x_{2,j} +
			\sum\limits_{\mathclap{\substack{1\le i\le 3\\1\le j<k\le 3}}} x_{1,i}x_{2,j}x_{2,k}
			-\sum\limits_{\mathclap{\substack{1\le i<j\le 3\\1\le k\le 3}}} x_{1,i}x_{1,j}x_{2,k},
			\]
			then the polyhedral homotopy continuation found $252$ complex KKT tuples,
			and there are $3$ of them satisfying the KKT system (\ref{eq:KKTsystem}).
			However, none of these KKT points are NEs,
			by Algorithm~\ref{ag:selectGNE}.
			Indeed, since the mixed volume for the complex KKT system equals $252$,
			all complex KKT tuples were found by homotopy continuation.
			Therefore, we detected that this NEP does not have any NE.
			It took around $3$ seconds to solve the complex KKT system,
			{and $1.09$ seconds to detect the nonexistence of NEs}.
		\end{example}
		
		\begin{example}
			\label{ep:GNEPvariation}
			\rm
			Consider a GNEP variation of the problem in Example~\ref{ep:NEP}(i).
			\begin{equation*}
				\mbox{1st player:} \left\{
				\begin{array}{cl}
					\min\limits_{x_1 \in \re^3} & \sum_{j=1}^3 x_{1,j}(x_{1,j}- j\cdot x_{2,j})\\
					s.t. & x_{2,3}-x_{1,1}x_{1,2} \ge 0, \,x_{2,1}-x_{1,2}x_{1,3}\ge 0,\, x_{1,1}-x_{2,2} \ge 0,
				\end{array}\right.
			\end{equation*}
			\begin{equation*}
				\mbox{2nd player:}  \left\{
				\begin{array}{cl}
					\min\limits_{x_2 \in \re^3} & %%x_{1,1}x_{1,2}x_{1,3}+ x_{2,1}x_{2,2}x_{2,3}+
					\prod_{j=1}^3 x_{2,j} +
					\sum\limits_{\mathclap{\substack{1\le i<j\le 3\\1\le k\le 3}}} x_{1,i}x_{1,j}x_{2,k}+
					\sum\limits_{\mathclap{\substack{1\le i\le 3\\1\le j<k\le 3}}} x_{1,i}x_{2,j}x_{2,k}
					\\
					s.t. & 1-(x_{1,1}x_{2,1})^2 - (x_{2,2})^2 - (x_{2,3})^2 = 0.
				\end{array}\right.
			\end{equation*}
			Similar to the problem in Example~\ref{ep:NEP}(i),
			this is a nonconvex GNEP, and the first player has an unbounded feasible strategy set.
			By implementing the polyhedral homotopy continuation on the complex KKT system,
			we computed the mixed volume $512$ and found $484$ complex KKT tuples,
			and $11$ of them satisfy the KKT system (\ref{eq:KKTsystem}).
			Since this is a nonconvex problem,
			we ran Algorithm~\ref{ag:selectGNE} for selecting GNEs.
			We obtained two GNEs $u=(u_1,u_2)$ with
			\[
			\begin{array}{ll}
				u_1=(0.8188, -0.3213, -0.3947),  & u_2=(0.8868, 0.6353, -0.2631);\\
				u_1=(0.5873, -0.5993,  0.6091),  & u_2=(1.1747, -0.5993, 0.4061). \\
			\end{array}
			\]
			Their accuracy parameters are respectively
			\[
			-4.0433\cdot 10^{-9}, \quad  -6.7675\cdot 10^{-9}.
			\]
			{It took about $9.85$ seconds to find all GNEs including $4$ seconds to solve the complex KKT system, and about $5.85$ seconds to verify GNEs.}
		\end{example}

		\begin{example}\rm
			\label{ep:convexGNEP}
			Consider the 2-player convex GNEP in \cite{Nie2021convex}
			\[
			\begin{array}{lllll}
				%\label{eq:rational_example}
				\min\limits_{x_1\in\re^2} & \sum\limits_{j=1}^2(x_{1,j}-1)^2+x_2(x_{1,1}-x_{1,2}) & \vline & \min\limits_{x_2\in\re^1} & (x_2)^3-x_{1,1}x_{1,2}x_2-x_2\\
				\st              & 2-x_1^{\top}x_1-x_2\ge0; & \vline & \st              & 3x_2-x_1^{\top}x_1\ge0 ,\,1-x_2\ge0.
			\end{array}
			\] 
			By implementing the polyhedral homotopy continuation on the complex KKT system,
			we knew the mixed volume is $23$, and we got $17$ complex KKT tuples.
			For these KKT tuples, only one of them satisfies the KKT system (\ref{eq:KKTsystem}).
			Because this is a convex GNEP,
			we got a GNE $u:=(u_1,u_2)$ from this KKT tuple with
			\[u_1=(0.4897, 1.0259),\quad u_2=(0.7077).\]
			It took around $2$ seconds to solve the complex KKT system.
		\end{example}
		
		\begin{example}\rm
			\label{ep:nonconvexGNEP}
			Consider a 2-player GNEP
			\begin{equation*}
				\mbox{1st player:} \left\{
				\begin{array}{cl}
					\min\limits_{x_1 \in \re^2} & 3x_{2,1}(x_{1,1})^3+5(x_{1,2})^3 -2\sum\nolimits_{j=1}^2x_{1,j}\cdot \sum\nolimits_{j=1}^2x_{2,j}\\
					s.t. & 5x_{1,1}-2x_{1,2}+3x_{2,2}-1\ge0, \ 3-x_{2,1}\cdot x_1^{\top}x_1\ge0,\\
					&  x_{1,1}\ge-2,\ x_{1,2}\ge1;
				\end{array}\right.
			\end{equation*}
			\begin{equation*}
				\mbox{2nd player:}  \left\{
				\begin{array}{cl}
					\min\limits_{x_2 \in \re^2} & (2x_{1,1}+3x_{1,2})(x_{2,1})^3-3x_{2,1}+7(x_{2,2})^2+5x_{1,1}x_{1,2}x_{2,2}
					\\
					s.t. & 7x_{1,2}+3x_{2,2}-5x_{2,1}^2+3\ge0, \ 2x_{2,1}\ge-1,\\
					& 2-x_{2,2}\ge0,\ 5+x_{2,2}\ge0.
				\end{array}\right.
			\end{equation*}
			This is a nonconvex GNEP.
			By implementing the polyhedral homotopy continuation on the complex KKT system,
			we knew the mixed volume is $480$ and polyhedral continuation found exactly $480$ complex KKT tuples.
			We ran Algorithm~\ref{ag:selectGNE}
			and obtained the unique GNE $u:=(u_1,u_2)$ with
			\[u_1=(0.7636,1.0000),\quad u_2=(0.4700,-0.2727),\quad \delta=-1.0220\cdot10^{-8}.\]
			Note that for this GNEP,
			the mixed volume of the complex KKT system coincides with the number of complex KKT tuples we found.
			The polyhedral homotopy found all complex KKT tuples,
			so all GNEs are obtained by our method.
			It took around $5.75$ seconds to find all GNEs including $4$ seconds to solve the complex KKT system,
			{and $1.75$ seconds to select the GNE.}
		\end{example}
		
		\begin{example} \rm  \label{ep58}
			Consider a GNEP whose optimization problems are
			\begin{equation*}
				\mbox{1st player:} \left\{
				\begin{array}{cl}
					\min\limits_{x_1 \in \re^2} & 2(x_{1,1})^2+7(x_{1,2})^2+3x_{1,1}+5x_{1,2}\\
					s.t. & 1-2(x_{1,1})^2-(x_{1,2})^2-3(x_{2,1})^2-5(x_{2,2})^2\ge0,\\
					&  1-x_{1,1}\ge0,\ \frac{1}{2}-x_{1,2}\ge0;
				\end{array}\right.
			\end{equation*}
			\begin{equation*}
				\mbox{2nd player:}  \left\{
				\begin{array}{cl}
					\min\limits_{x_2 \in \re^2} & 3(x_{2,2})^2-4x_{2,1}x_{2,2}\\
					s.t. & 3(x_{1,1})^2+(x_{1,2})^2+\frac{7}{10}(x_{2,1})^2+6(x_{2,2})^2-1\ge0,\\
					& 7-x_{2,1}\ge0,\ x_{2,2}-\frac{3}{10}\ge0,\  \frac{8}{10}-x_{2,2}\ge0.
				\end{array}\right.
			\end{equation*}
			This is a nonconvex GNEP.
			By implementing the polyhedral homotopy continuation on the complex KKT system,
			we computed the mixed volume is $168$ and polyhedral homotopy found $168$ complex KKT tuples.
			However, none of them are GNEs.
			It took around $3$ seconds to solve this problem, including $3$ seconds to solve the complex KKT system,
			{and $0.001$ seconds to detect the nonexistence of GNEs.}
		\end{example}
		
		\begin{example}\rm
			\label{ep:A3}
			Consider a convex GNEP of $3$ players.
			For $i=1,2,3$, the $i$th player aims to minimize the quadratic function
			$$f_i(x)=\frac{1}{2}x_i^{\top}A_ix_i+x_i^{\top}(B_i\xmi+b_i).$$
			All variables have box constraints $-10\le x_{i,j}\le 10$, for all $i,j$.
			In addition to them, the first player has linear constraints $x_{1,1}+x_{1,2}+x_{1,3}\le 20,\,x_{1,1}+x_{1,2}-x_{1,3}\le x_{2,1}-x_{3,2}+5$;
			the second player has $x_{2,1}-x_{2,2}\le x_{1,2}+x_{1,3}-x_{3,1}+7$;
			and the third player has $x_{3,2}\le x_{1,1}+x_{1,3}-x_{2,1}+4.$
			
			(i) Consider the case that the values of parameters are set as in \cite[Example A.3]{facchinei2010penalty}:
			\[
			\begin{array}{cccc}
				A_1=\lvt\begin{array}{ccc}
					20&5&3\\5&5&-5\\3&-5&15
				\end{array}\rvt,\ 
				A_2=\lvt\begin{array}{ccc}
					11&-1\\-1&9
				\end{array}\rvt,\ 
				A_3=\lvt\begin{array}{ccc}
					48&39\\39&53
				\end{array}\rvt,\\
				B_1=\lvt\begin{array}{cccc}
					-6&10&11&20\\10&-4&-17&9\\15&8&-22&21
				\end{array}\rvt,\ 
				B_2=\lvt\begin{array}{ccccc}
					20&1&-3&12&1\\10&-4&8&16&21
				\end{array}\rvt,\\
				B_3=\lvt\begin{array}{ccccc}
					10&-2&22&12&  16  \\
					9&19&21&-4&20
				\end{array}\rvt,\ 
				b_1=\lvt\begin{array}{ccc}
					1\\-1\\1
				\end{array}\rvt,\ 
				b_2=\lvt\begin{array}{ccc}
					1\\0
				\end{array}\rvt,\ 
				b_3=\lvt\begin{array}{ccc}
					-1\\2
				\end{array}\rvt.
			\end{array}\]
			This is a convex GNEP since for all $i\in\{1,2,3\}$, the $A_i$ is positive semidefinite and all constraints are linear.
			By implementing the polyhedral homotopy continuation on the complex KKT system,
			we got the mixed volume $12096$, and polyhedral homotopy found $11631$ complex KKT tuples.
			There are $5$ GNEs obtained by Algorithm~\ref{ag:selectGNE},
			which are presented in the following table.
			\begin{table}[htb]
				\renewcommand{\arraystretch}{1}
				\centering
				\renewcommand{\arraystretch}{1.20}
				\begin{tabular}{c||c|c|c}  
					& $u_1$ &  $u_2$ & $u_3$\\ \hline\hline
					1& (-0.3805,-0.1227,-0.9932) &  (0.3903,1.1638)   & (0.0504,0.0176)   \\\hline
					2&(-0.9018,-4.4017,-2.1791) & (-2.0034,-2.4541)  & (-0.0316,2.9225)  \\\hline
					3& (-0.8039,-0.3062,-2.3541) & (0.9701, 3.1228)  & (0.0751,-0.1281)  \\\hline
					4& (1.9630,-1.3944, 5.1888) & (-3.1329,-10.0000) & (-0.0398,1.6392)  \\\hline
					5& (0.6269,10.0000,9.3731)  &  (1.8689,10.0000)  & (0.3353,-10.0000) \\
				\end{tabular}
			\end{table}
			
			\noindent It took around 177 seconds to solve the complex KKT system.
			We would like to remark that in \cite{facchinei2010penalty} and \cite{Nie2021convex}, only the first GNE was found,
			and the second to the fourth GNEs are new solutions found by our algorithm.

			(ii) If we let
			\[
			A_1=\lvt\begin{array}{ccc}
				1&2&3\\2&5&-5\\3&-5&15
			\end{array}\rvt,\
			\]
			and all other parameters be given as in (i),
			then this GNEP is nonconvex.
			By implementing the polyhedral homotopy continuation on the complex KKT system,
			the mixed volume equals $12096$ and we got $11620$ complex KKT tuples,
			and five of them satisfy the KKT condition (\ref{eq:KKTsystem}).
			Since this is a nonconvex problem,
			we ran Algorithm~\ref{ag:selectGNE} for selecting GNEs,
			and obtained one GNE $u=(u_1,u_2,u_3)$ with
			\[u_1=(0.9968,10.0000,9.0032),\ 
			u_2=(0.6668,10.0000),\ 
			u_3=(0.7283,-10.0000).\]
			The accuracy parameter is $-9.5445\cdot10^{-7}$.
			It took around $209.68$ seconds to find all GNEs including $207$ seconds to solve the complex KKT system,
			{and $2.68$ seconds to select the GNE.}
		\end{example}
		
		\subsection{Comparison with existing methods}
		In this subsection, we compare the performance of Algorithm~\ref{ag:selectGNE} with some existing methods for solving GNEPs,
		such as the augmented Lagrangian method (ALM) in \cite{kanzow2016augmented},
		Gauss-Seidel method (GSM) in \cite{nie2021gauss},
		the interior point method (IPM) in \cite{dreves2011solution},
		and the semidefinite relaxation method (KKT-SDP) in \cite{Nie2021convex}.
		We tested these methods on all GNEPs of polynomials in Examples~\ref{ep:NEP}-\ref{ep:A3}.
		
		Given a computed tuple $u=(u_1,\dots, u_N)$ for an $N$-player game.
		Then, $u$ is a GNE if and only if $\delta\ge0$.
		For the KKT-SDP method, 
		we say the method finds a GNE successfully whenever $\dt\ge-10^{-6}$ since $\dt\geq 0$ may not be possible due to a numerical error.
		For other earlier algorithms mentioned above, since they are iterative methods, the stopping criterion is given as the following:
		For the computed tuple $u$, when $\min\limits_{i=1,\dots,N}\left\{ \min\limits_{j\in\mc{I}} \{ g_{i,j}(x)\} ,\  \min\limits_{j\in\mc{E}}\{ -|g_{i,j}(x)|\}\right\}\ge -10^{-6}$,
		we solve (\ref{eq:checking}) for each $i$.
		If we further have $\dt\ge-10^{-6}$, then we stop the iteration and report that the method found a GNE successfully.

		For the ALM, GSM and IPM, the same parameters are applied as in \cite{dreves2011solution,kanzow2016augmented,nie2021gauss}.
		In the augmented Lagrangian method, full penalization is used,
		and a Levenberg-Marquardt type method (see \cite[Algorithm~24]{kanzow2016augmented})
		is implemented to solve penalized subproblems.
		For the Gauss-Seidel method,
		normalization parameters are updated as (4.3) in \cite{nie2021gauss},
		and Moment-SOS relaxations are used to
		solve normalized subproblems.
		We let $1000$ be the maximum number of iterations for the ALM and IPM,
		and at most $100$ iterations are allowed in the GSM.
		For initial points,
		we use $(0,0,0,\frac{1}{\sqrt{3}},\frac{1}{\sqrt{3}},\frac{1}{\sqrt{3}})$ for Examples~\ref{ep:NEP}(i-ii),
		$(0,0,0,0,-\frac{1}{\sqrt{2}},\frac{1}{\sqrt{2}})$ for Example~\ref{ep:GNEPvariation},
		$(0, 0, 1, 1)$ for Example~\ref{ep:nonconvexGNEP},
		$(0, 0, 0, \frac{1}{\sqrt{5}})$ for Example~\ref{ep58},
		and the zero vector for all other problems.
		For the one-shot KKT-SDP method, randomly generated positive semidefinite matrices are exploited to formulate polynomial optimization.
		Note that the ALM, IPM and KKT-SDP are designed for finding a KKT point of the GNEP.
		When the GNEP is convex (e.g., Examples~\ref{ep:convexGNEP} and \ref{ep:A3}(i)),
		the limit point is guaranteed to be a GNE, if these methods produce a convergent sequence.
		However, since we in general do not make any convexity assumption,
		it is possible that these methods converge to a KKT point which is not a GNE.
		For the ALM and IPM, the produced sequence is
		considered convergent to a KKT point if the last iterate satisfies the KKT
		conditions up to a small round-off error (say, $10^{-6}$).
		If the iterations are convergent
		but the stopping criterion is not met,
		we still solve (\ref{eq:checking}) to check if
		the latest iterating point is a GNE or not.
		
		The numerical results are shown in Table~\ref{tab:comparison}.
		In the table, ``time'' is the time consumption in seconds for solving the problem,
		and ``error'' is the quantity $-\delta$ at the computed solution.
		In general, the method can be regarded to solve the GNEP successfully if the error is small (e.g., less than $10^{-6}$).
		For Algorithm~\ref{ag:selectGNE},
		when there are more than one GNEs obtained,
		we present the largest error among these GNEs.
		\begin{table}[ht]
			\renewcommand{\arraystretch}{1.20}
			\begin{tabular}{c|c||c|c|c|c|c}  
				\multicolumn{2}{c||}{Example}           & ALM    & IPM & GSM & KKT-SDP & Algorithm~\ref{ag:selectGNE} \\ \hline\hline
				\multirow{2}{*}{\ref{ep:NEP}(i)} & time  & 27.52 & 13.34 & \multirow{2}{*}{Fail}  & 2.95  & 12.6 \\ \cline{2-4}\cline{6-7}
				& error & 4.67 & 4.67 & & 1.48 & $< 8\cdot 10^{-9}$ (4 GNEs) \\ \hline
				
				\multirow{2}{*}{\ref{ep:NEP}(ii)}
				& time & 32.03 & 8.04 & \multirow{2}{*}{Fail}  & 2.92  & 7.8 \\ \cline{2-4}\cline{6-7}
				& error & 1.11 & 1.11 & & 0.19 & no GNE \\ \hline
				
				\multirow{2}{*}{\ref{ep:GNEPvariation}}
				& time  & \multirow{2}{*}{Fail} & \multirow{2}{*}{Fail} & \multirow{2}{*}{Fail} & 4.65  & 9.9 \\ \cline{2-2}\cline{6-7}
				& error &  &  & & 0.66 & $< 7\cdot 10^{-9}$ (2 GNEs) \\ \hline
				
				\multirow{2}{*}{\ref{ep:convexGNEP}}
				& time  & 0.72 & 3.14 & 4.45 & 1.51 & 2.0 \\ \cline{2-7}
				& error & $2\cdot 10^{-7}$ & $2\cdot 10^{-7}$ & $2\cdot 10^{-7}$ & $8\cdot 10^{-9}$ & $1\cdot 10^{-8}$ \\ \hline
				
				\multirow{2}{*}{\ref{ep:nonconvexGNEP}}
				& time  & \multirow{2}{*}{Fail} & 1.69 & 11.47 & 17.89 & 5.75  \\ \cline{2-2}\cline{4-7}
				& error &  & $2\cdot 10^{-7}$ & $4\cdot 10^{-7}$ & $1\cdot 10^{-6}$  & $2\cdot 10^{-8}$ \\ \hline
				
				\multirow{2}{*}{\ref{ep58}}
				& time  & \multirow{2}{*}{Fail} & \multirow{2}{*}{Fail} & \multirow{2}{*}{Fail} & 1.51 & 3 \\ \cline{6-7}
				& error &  &  &  & no GNE & no GNE \\ \hline
				
				\multirow{2}{*}{\ref{ep:A3}(i)}
				& time  & 1.50 & 3.12 & \multirow{2}{*}{Fail} & 11.55 & 177 \\ \cline{2-4}\cline{6-7}
				& error & $1\cdot 10^{-7}$ & $2\cdot 10^{-7}$  &  & $2\cdot 10^{-7}$ & $< 1\cdot 10^{-6}$ (5 GNEs) \\ \hline
				
				\multirow{2}{*}{\ref{ep:A3}(ii)}
				& time  & 59.93 & 16.19 & \multirow{2}{*}{Fail} & 11.29 & 210 \\ \cline{2-4}\cline{6-7}
				& error & $123.22$ & 123.22 &  & 123.22& $1\cdot 10^{-7}$ \\
			\end{tabular}%
			\smallskip
			\caption{Comparison with other methods. The ``time" gives the consumed time (in seconds) for finding a GNE or a KKT point,
				and the ``error" measures the quantity $-\delta$ of the computed GNE candidate.}\label{tab:comparison}

		\end{table}
		
		The comparison is summarized as follows:
		\begin{enumerate}
			\item The augmented Lagrangian method converges to a KKT point that is not a GNE for Examples~\ref{ep:NEP}(i-ii) and \ref{ep:A3}(ii).
			For Example~\ref{ep:GNEPvariation}, the iteration cannot proceed because the maximum penalty parameter was reached at the $14$th iteration.
			For Examples~\ref{ep:nonconvexGNEP} and \ref{ep58}, it fails to converge because the penalized subproblem cannot be solved accurately.
			
			\item The interior point method converges to a KKT point that is not a GNE for Examples~\ref{ep:NEP}(ii), \ref{ep:nonconvexGNEP} and \ref{ep:A3}(ii).
			For Examples~\ref{ep:GNEPvariation} and \ref{ep58}, the algorithm does not converge. In this problem, the Newton-type directions usually do not satisfy sufficient descent conditions.
			\item For Examples~\ref{ep:NEP}(i-ii) and \ref{ep:A3}(i), the Gauss-Seidel method failed to converge and it alternated between several points.
			For Examples~\ref{ep:GNEPvariation}, \ref{ep58} and \ref{ep:A3}(ii), the iteration cannot proceed at some stages since global minimizers for normalized subproblems cannot be obtained. Usually, this is because the normalized subproblem is infeasible or unbounded.
			\item The semidefinite relaxation method obtained a KKT point that is not a GNE for Examples~\ref{ep:NEP}(i-ii), \ref{ep:GNEPvariation} and \ref{ep:A3}(ii).
			
			\item Algorithm~\ref{ag:selectGNE} detected nonexistence of GNEs for Examples~\ref{ep:NEP}(ii) and \ref{ep58}.
			We would like to remark that if there exist KKT points that are not GNEs, then the semidefinite relaxation method may not detect the nonexistence of GNEs.
			For all other GNEPs,
			Algorithm~\ref{ag:selectGNE} found at least one GNE.
			Moreover, for Examples~\ref{ep:NEP}(i), \ref{ep:GNEPvariation} and \ref{ep:A3},
			Algorithm~\ref{ag:selectGNE} found more than one GNE,
			and the completeness of GNEs are guaranteed for Examples~\ref{ep:NEP}(i), \ref{ep:GNEPvariation} and \ref{ep:nonconvexGNEP}.

		\end{enumerate}

		\subsection{GNEPs of polynomials with randomly generated coefficients}
		We present numerical results of Algorithm~\ref{ag:selectGNE} on GNEPs defined by polynomials whose coefficients are randomly generated.
		For the GNEP with $N$ players, we assume that all players have the same dimension for their strategy vectors, i.e., $n_1=n_2=\cdots =n_N$. The $i$th player's optimization problem is given by 
		\be
		\label{eq:randgnep}
		\left\{ \begin{array}{cl}
			\min\limits_{\xpi\in \Rni}  &  \fpi(x_i,x_{-i}) \\
			\st & -x_i^{\top}A_ix_i + \xmi^{\top}B_ix_i + c_i^{\top}x \ge d_i.
		\end{array} \right.
		\ee
		In the above, we have $A_i=R_i^{\top} R_i$ with a randomly generated matrix $R_i\in\re^{n_i\times n_i}$. Also, $B_i\in\re^{{n_i}\times (n-n_i)}$, $c_i\in\re^n$, $d_i\in\re$ are randomly generated real matrices or vectors.
		Under this setting, the constraining function of (\ref{eq:randgnep}) is a convex polynomial in $x_i$, and the $X_i(\xmi)$ is compact, for all $\xmi\in\re^{n-n_i}$.
		
		For the objective function $f_i$,
		we consider two cases.
		First, we let
		\[f_i:=x_i^{\top}\Sigma_ix_i + \xmi^{\top}\Lambda_ix_i + c_i^{\top}x,\]
		where $\Sigma_i=\Theta_i^{\top}\Theta_i$ with a randomly generated matrix $\Theta_i\in\re^{n_i\times n_i}$,
		and $\Lambda_i$ (resp., $c_i$) is a randomly generated matrix (resp., vector) in $\re^{{n_i}\times (n-n_i)}$ (resp., in $\re^n$).
		In this case, the GNEP given by (\ref{eq:randgnep}) is convex, and all KKT points are GNEs.
		The second case is for GNEPs without convexity settings. 
		We consider a degree $d$ dense polynomial with randomly generated real coefficients,
		i.e.,
		\[f_i:=\zeta^{\top}[x]_d,\]
		where $\zeta$ is a randomly generated real vector of the proper size.
		To choose real matrices, vectors and coefficients randomly, we use the {\tt Matlab} function {\tt unifrnd} that generates real numbers following the uniform distribution.
		
		The numerical results are presented in Table \ref{tab:randgnep}.
		By Theorem \ref{tm:mvfound},
		if the mixed volume many complex KKT points are obtained,
		then Algorithm \ref{ag:selectGNE} can find all GNEs or detect the nonexistence of GNEs. 
		Since we consider random examples, the homotopy method mostly finds all mixed volume many KKT points. 
		As the problem sizes grow, there are some cases where the homotopy method cannot find all mixed volume many KKT points, due to numerical issues. 
		
		\section{Conclusions and discussions}
		This paper studies a new approach for solving GNEPs of polynomials using the polyhedral homotopy continuation and the Moment-SOS relaxations.
		We show that under some generic assumptions,
		the mixed volume and the algebraic degree for the complex KKT system are identical,
		and our method can find all GNEs or detect the nonexistence of GNEs.
		Some numerical experiments are presented to show the effectiveness of our method.
		
		For future work,
		it is interesting to find local GNEs,
		i.e.,  find $x=(x_1\ddd x_N)$ such that each $x_i$ is local minimizer for $\mbF_i(\xmi)$.
		Note that every local GNE satisfies the KKT condition.
		However, it is difficult to select local GNEs from KKT points,
		especially when \emph{the second-order sufficient optimality conditions} (see \cite{Brks}) are not satisfied.
		Moreover, when the $|\mc{K}_{\cpx}|$ is strictly less than the mixed volume for (\ref{eq:KKTep}),
		how do we know whether our method finds all GNEs or detects nonexistence of GNEs or not?
		These questions are mostly open, to the best of the authors' knowledge.
		
		\begin{table}[ht]
			\renewcommand{\arraystretch}{1.20}
			
			\begin{tabular}{c|c|c|c|c}  
				\multicolumn{5}{@{}l}{(a) Degree $2$ convex GNEPs}\\
				$N$ & $n_i$ & mixed volume & rate of success & average time \\[.3pt] \hline\hline 
				2 & 2 & 25 &      100\%           &  0.0575  \\ 
				2 & 3 &49 &      100\%           &  0.1721  \\ 
				2 & 4 &  81 &    100\%           & 0.9539 \\     
				3 & 2 &  125  &  100\%           & 0.9118 \\ 
				3 & 3 & 343   &  100\%             &3.4150  \\     
			\end{tabular}
			\bigskip
			
			\begin{tabular}{c|c|c|c|c|c}  
				\multicolumn{6}{@{}l}{(b) Degree $d$ nonconvex GNEPs}\\
				
				$d$ & $N$ & $n_i$ & mixed volume & rate of success & average time (seconds)\\[.3pt] \hline\hline 
				\multirow{5}{*}{2}   & 2 & 2 & 25 &     100\%            &  0.0563 + 1.1330\\  
				& 2 & 3 &  49 &    100\%            &  0.1802 + 1.5098\\  
				& 2 & 4 &   81 &   100\%            & 0.8819 + 1.9762 \\  
				& 3 & 2 &  125  &  100\%            &  0.8473 + 3.1890\\  
				& 3 & 3 &  343  &  100\%            &  3.3804 + 6.9738\\  \hline
				\multirow{4}{*}{3} & 2 & 2 &100          &   100\%    & 0.1893 + 2.5667  \\ 
				& 2 & 3 &  484  & 100\%            &  2.18 + 5.7500 \\ 
				& 2 & 4 &   2116   &    98\%       & 21.483 + 17.3477 \\      & 3 & 2 &  1000  &   97\%          & 5.255 + 14.4360  \\ \hline
				\multirow{3}{*}{4}    & 2 & 2 &   289     & 100\%        & 0.8270 +4.4256 \\ 
				& 2 & 3 &  2809   & 95\%           & 24.533 + 21.9054 \\ 
				& 3 & 2 &  4913    &     95\%      & 44.0899 + 40.6792  \\

			\end{tabular}
			\bigskip
			\caption{Numerical results for random GNEPs. $N$ and $n$ indicate the number of players and the number of variables for each player respectively. The ``rate of success'' indicates the percentage of GNEPs such that the homotopy continuation finds the mixed volume many KKT points. The ``average time'' represents the average of the elapsed times for $\text{(KKT points computation)}+\text{(GNE selection)}$ in seconds. For convex problems, the elapsed time is only measured for the KKT points computation.}\label{tab:randgnep}

		\end{table}
		
		\section*{Acknowledgements}
		The authors would like to thank Jiawang Nie for motivating this paper and helpful comments.
		Xindong Tang is partially supported by the Start-up Fund P0038976/BD7L from The Hong Kong Polytechnic University.

	\end{document}